\newtheorem{thm}{Theorem}
\newtheorem{cor}[thm]{Corollary}
\newtheorem{prop}[thm]{Proposition}
\newtheorem{lemma}[thm]{Lemma}
\newtheorem{lem}[thm]{Lemma}
\theoremstyle{remark}
\theoremstyle{definition}
\numberwithin{equation}{subsection}
\renewcommand{\bar}{\overline}
\newcommand{\C}{{\mathbb{C}}}
\newcommand{\F}{{\mathbb{F}}}
\newcommand{\Z}{{\mathbb{Z}}}
\newcommand{\Hom}{\mathrm{Hom}}
\newcommand{\GL}{\mathrm{GL}}
\newcommand{\SL}{\mathrm{SL}}
\newcommand{\diag}{\mathrm{diag}}
\newcommand{\Res}{\mathrm{Res}}
\newcommand{\Irr}{\mathrm{Irr}\,}
\newcommand{\id}{\mathrm{id}}
\newcommand{\Alt}{{\raise 2pt\hbox{$\scriptstyle\bigwedge$}}}
\title{On commutator fibers over regular semisimple and central elements of $SL_n$}
\author{Zhipeng Lu}
\address{Mathematisches Institut\\
	Georg-August Universit\"{a}t G\"{o}ttingen\\
	Bunsenstra\ss e 3-5\\
	D-37073 G\"{o}ttingen\\
	Germany}
\email{Zhipeng.Lu@uni-goettingen.de}
\begin{document}
	\maketitle
\section{Introduction}
        In \cite{Larsen-Lu}, we studied the commutator map on special linear groups and proved the equality of dimension of almost all of its fibers. In particular, we showed that the commutator morphism is flat over $\SL_n(\C)$ ($n\geq 2$) away from its center. Let $q$ denote a prime power and define $G_n$ to be $\GL_n(\F_q)$. The crucial step in \cite{Larsen-Lu} is to first establish an analogous result, so called \textit{numerical flatness} on  $\SL_n(\F_q)$, by estimating character values of non-central elements in $G_n$. In more details, let $[,]\colon G_n^2\to \SL_n(\F_q)$ be defined by the commutator $[a,b]=aba^{-1}b^{-1}$, then for any $c\in\SL_n(\F_q)$ non-central, \cite{Larsen-Lu} shows that
        \[|[,]^{-1}(c)|=O(q),\]
        in which $|A|$ denotes the number of elements of the set $A$. Here the implicit constant in the estimate $O(q^m)$ will always be meant to depend on $n$ but not on $q$.
        
        In this paper, we account some detailed calculation of irreducible character values of $G_n$ with application to estimate fiber size of the commutator map over regular semisimple and central elements of $\SL_n$. For regular semisimple elements, we show
        \begin{thm}\label{thm-fiber size over regular semisimples}
        	\[|[,]^{-1}(c)|=(q+O(1))|G_n|,\]
        	for $c$ regular semisimple in $\SL_n(\F_q)$.
        \end{thm}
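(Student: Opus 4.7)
The approach is to apply Frobenius's classical formula
\[|[,]^{-1}(c)|\;=\;|G_n|\sum_{\chi\in\Irr G_n}\frac{\chi(c)}{\chi(1)},\]
reducing the theorem to the character-sum estimate $S(c):=\sum_{\chi}\chi(c)/\chi(1)=q+O(1)$. The $q-1$ one-dimensional characters of $G_n$ are precisely $\psi\circ\det$ for $\psi\in\widehat{\F_q^\times}$, and since $\det c=1$ each contributes exactly $1$ to $S(c)$, totalling $q-1$. It remains to show that the contribution from non-linear irreducibles is $O(1)$.

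For this I would use Green's parametrization of $\Irr\GL_n(\F_q)$ together with the Deligne--Lusztig character formula at regular semisimple elements. Writing $T_c\subset G_n$ for the centralizer of $c$---a maximal torus whose $G_n$-conjugacy class is recorded by the factorization type of the characteristic polynomial of $c$---one has $R_T^{G_n}(\theta)(c)=0$ unless $T$ is $G_n$-conjugate to $T_c$, and
\[R_{T_c}^{G_n}(\theta)(c)\;=\;\epsilon_{T_c}\sum_{w\in W(T_c)}\theta({}^wc),\]
a sum of at most $|W(T_c)|\le n!$ roots of unity. Hence every non-linear $\chi$ with $\chi(c)\ne 0$ is a constituent of some $R_{T_c}^{G_n}(\theta)$; it satisfies $|\chi(c)|\le n!$ and $\chi(1)\ge|G_n|_{p'}/|T_c|$, which is of order $q^{n(n-1)/2}$. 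For $n\ge 3$, the naive bound $|T_c|\cdot n!/q^{n(n-1)/2}=O(q^{n(3-n)/2})=O(1)$ already suffices.

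The case $n=2$, and more conceptually a uniform bound for all $n$, requires extracting cancellation via Fourier orthogonality on $T_c^F$. Writing each non-linear $\chi$ as a constituent $\chi^{\theta}_{\lambda}$ of $R_{T_c}^{G_n}(\theta)$ indexed by $\lambda\in\Irr W_\theta$ (where $W_\theta\subset W(T_c)$ stabilizes $\theta$), and interchanging summation to do the $\theta$-sum first, the leading term collapses to an inner sum $\sum_\theta\theta({}^wc)$, which vanishes by Fourier orthogonality since $c$ (hence ${}^wc$) is non-central. The residual contribution comes from the discrepancy between the virtual degree $R_{T_c}^{G_n}(\theta)(1)$ and the irreducible degrees $\chi^{\theta}_{\lambda}(1)$ when $W_\theta\ne 1$. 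The main obstacle is bounding this residual uniformly in $q$ across all torus types $T_c$---the detailed character computation advertised in the introduction. A model case is $\GL_2(\F_q)$, where direct computation gives Steinberg twists contributing $1+O(1/q)$, split-principal series contributing $-1+O(1/q)$, and cuspidal characters vanishing on split regular semisimple $c$, giving $S(c)=q-1+O(1/q)$.
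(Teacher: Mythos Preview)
Your overall strategy---Frobenius's formula, isolate the $q-1$ linear characters, then show the remaining sum is $O(1)$---is the same as the paper's. The Deligne--Lusztig viewpoint is also a reasonable route, and the observation that only constituents of $R_{T_c}^{G_n}(\theta)$ survive at a regular semisimple $c$ is correct. The genuine gap is the degree assertion $\chi(1)\ge |G_n|_{p'}/|T_c|$: this holds only when $\theta$ is in general position, and fails badly otherwise. Already for split $T_c$ and $\theta=1$, the non-linear unipotent constituents of $R_{T_c}^{G_n}(1)$ have degrees ranging from roughly $q^{n-1}$ up to $q^{n(n-1)/2}$; only the Steinberg meets your bound. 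Consequently the ``naive bound for $n\ge 3$'' does not go through: with $\sim q^n$ characters of value $O_n(1)$ and minimum non-linear degree $\sim q^{n-1}$, the crude estimate gives only $O(q)$, not $O(1)$. So the cancellation you postpone to the second paragraph is needed in \emph{every} rank, and that is where the entire content of the theorem lies; as written, the proposal identifies the obstacle but does not resolve it.

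The paper handles this bookkeeping not via Deligne--Lusztig but via Green's combinatorial parametrization, grouping $\Irr G_n$ into \emph{types} $(\bar\lambda,\mathbf s)$ recording the partition shapes and simplex degrees. For split regular semisimple $c$ the only partition with a substitution into $c$ is $\{1^n\}$, and the only contributing types have all $s_i=1$; the partial sum $\mathcal S_{\bar\lambda,\mathbf s}(c)$ then collapses to a single expression $B_{\{1^n\},\bar\lambda,\mathbf s,c,m,m'}/I_{\bar\lambda,\mathbf s}(1)$. The numerator, being a sum over $S_n$ of products of $k$ characters of $\F_q^\times$, is $O(q^k)$ after summing over the $k$ simplices; a direct degree lower bound gives $I_{\bar\lambda,\mathbf s}(1)\gtrsim q^{(k^2-k)/2}$, so each non-linear type contributes $O(q^{2k-k^2})=O(1)$. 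This type-by-type organization is precisely the ``stratify $\widehat{T_c}$ by stabilizer and track constituent degrees'' step that your sketch leaves open; Green's explicit formulas make it a two-line computation, whereas carrying it out in the Deligne--Lusztig language would require reproving the same degree inequality for the unipotent almost-characters of each $W_\theta$.
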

        In the arithmetic aspect, this improves numerical flatness to \textit{ numerical geometric connectedness} over regular semisimple elements. However the overall geometric connectedness away from center of $\SL_n$ is still a tentative guess, although regular semisimples appear dense.
        
        For central elements of $\SL_n(\F_q)$, we show 
        \begin{thm}\label{thm-over center}
        	If $c\sim \xi I_n\in\SL_n(\F_q)$ with $\xi\in\F_q^\times$ and $\xi^n=1$, then
        	\[|[,]^{-1}(c)=(q^{n/ord(\xi)}+O(1))|G_n|,\]
        	where $ord(\xi)$ is the order of $\xi$ in $\F_q^\times$.
        \end{thm}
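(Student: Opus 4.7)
The plan is to reduce the fiber count to counting $\xi$-stable conjugacy classes of $G_n$ and then identify these bijectively with conjugacy classes of $\GL_m(\F_q)$, where $m = n/\mathrm{ord}(\xi)$.

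Rewriting $[a,b]=\xi I_n$ as $aba^{-1}=\xi b$ shows that, for each fixed $b\in G_n$, there are exactly $|C_{G_n}(b)|$ solutions $a$ when $b\sim\xi b$ and none otherwise. Summing $|C_{G_n}(b)|$ over $b$ in a conjugacy class $C$ satisfying $\xi C=C$ yields $|G_n|$ by orbit--stabilizer, whence
\[|[,]^{-1}(\xi I_n)|=|G_n|\cdot\mathcal{C}(\xi),\]
where $\mathcal{C}(\xi)$ denotes the number of $\xi$-stable conjugacy classes of $G_n$.

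Next, parametrize conjugacy classes of $\GL_n(\F_q)$ by functions $\psi\colon\Phi\to\mathrm{Partitions}$ with $\sum_f\deg(f)|\psi(f)|=n$, where $\Phi$ is the set of monic irreducible polynomials in $\F_q[x]\setminus\{x\}$; multiplication by $\xi$ acts on $\Phi$ by $f\mapsto f_\xi$, where $f_\xi$ is the monic polynomial whose roots are $\xi$ times those of $f$, and a class is $\xi$-stable iff $\psi$ is constant on $\xi$-orbits in $\Phi$. Writing $k=\mathrm{ord}(\xi)$, I first observe that any $\xi$-orbit of size $s$ consisting of degree-$d$ polynomials satisfies $k\mid sd$: if $s<k$, then $\langle\xi^s\rangle$ acts freely on the nonzero root set of each polynomial in the orbit (since $\xi^s\neq 1$), so its orbits on roots have size exactly $k/s$, forcing $(k/s)\mid d$. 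Set the effective degree $d'([f]):=sd/k\in\Z_{>0}$. I then claim a bijection
\[\{\xi\text{-orbits in }\Phi\text{ of effective degree }d'\}\;\longleftrightarrow\;\{\text{monic irreducibles over }\F_q\text{ of degree }d'\}\]
sending $[f]$ to the minimal polynomial over $\F_q$ of $\alpha^k$, where $\alpha$ is any root of any polynomial in $[f]$. Well-definedness uses that the $k$-th roots of unity in $\bar{\F_q}$ lie in $\langle\xi\rangle\subset\F_q^\times$ (because $k\mid q-1$); this also gives injectivity, since $\alpha^k=(\alpha')^k$ forces $\alpha'=\xi^i\alpha$, placing $\alpha'$ in the $\xi$-orbit of $\alpha$. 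Surjectivity is obtained by lifting any root $\beta$ of a given irreducible $g$ to a $k$-th root $\alpha\in\bar{\F_q}$ of $\beta$ and taking $f$ to be the minimal polynomial of $\alpha$.

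Granting this bijection, the generating-function identity
\[\sum_{n\ge 0}\mathcal{C}(\xi)\,t^n=\prod_{d'\ge 1}P(t^{kd'})^{\nu_{d'}(q)},\qquad P(t)=\prod_{j\ge 1}(1-t^j)^{-1},\]
holds, where $\nu_{d'}(q)$ counts monic irreducibles of degree $d'$ over $\F_q$ distinct from $x$. Substituting $u=t^k$ and extracting the coefficient of $t^n=u^m$ identifies $\mathcal{C}(\xi)$ with the number of conjugacy classes of $\GL_m(\F_q)$, which is a polynomial in $q$ of degree $m$ with leading coefficient $1$ (as follows from the standard identity $\sum_m|\mathrm{Cl}(\GL_m(\F_q))|\,u^m=\prod_{i\ge 1}(1-u^i)/(1-qu^i)$). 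Combined with the first-paragraph reduction, this yields the stated estimate. The main obstacle is the Galois-theoretic degree identity $[\F_q(\alpha):\F_q(\alpha^k)]=k/s$ that underpins $d'=sd/k$ in the bijection; its verification requires analyzing the joint action of Frobenius and multiplication by $\xi$ on the root set of the orbit $[f]$, but the orbit-size constraint from the divisibility $k\mid sd$ reduces this to a short tower-of-fields computation.
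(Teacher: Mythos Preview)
Your approach is correct and takes a genuinely different route from the paper. The paper evaluates the Frobenius sum $\mathcal{S}(\xi)=\sum_\chi\chi(\xi)/\chi(1)$ through Green's parametrization of $\Irr G_n$: Proposition~\ref{prop-general Frobenius partial sum on center} reduces each $\mathcal{S}_{\bar\lambda,\mathbf s}(\xi)$ to a sum over dual classes of a fixed type, Corollary~\ref{cor-type of distinct degrees on center} evaluates this for distinct types, and Corollary~\ref{cor-partial Frobenius sum on center} extracts the leading power of $q$. Your argument bypasses character theory entirely: the orbit--stabilizer reduction $|[,]^{-1}(\xi I_n)|=|G_n|\cdot\mathcal C(\xi)$ is immediate, and your bijection (via $\alpha\mapsto\alpha^k$) between $\xi$-orbits in $\Phi$ and monic irreducibles over $\F_q$ yields the exact identity $\mathcal C(\xi)=|\mathrm{Cl}(\GL_m(\F_q))|$, which is sharper and cleaner than what the paper's character computation establishes. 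The degree identity $[\F_q(\alpha):\F_q(\alpha^k)]=k/s$ that you flag as the main obstacle is indeed short: the root set of the orbit $[f]$ is the $\langle\xi,\mathrm{Frob}\rangle$-orbit of $\alpha$, of size $sd$; the map $\beta\mapsto\beta^k$ has $\langle\xi\rangle$-orbits (each of size $k$) as fibers and the $\mathrm{Frob}$-orbit of $\alpha^k$ as image, so $[\F_q(\alpha^k):\F_q]=sd/k$. The trade-off is scope: your reduction is special to central $c$, whereas the paper's character machinery is what also drives Theorem~\ref{thm-fiber size over regular semisimples} and the non-central semisimple estimates.

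One precision worth flagging: your conclusion is that $\mathcal C(\xi)$ is a monic degree-$m$ polynomial in $q$, hence $q^m+O(q^{m-1})$, not $q^m+O(1)$ as the theorem literally asserts. This is not a defect in your argument---already $|\mathrm{Cl}(\GL_3(\F_q))|=q^3-q$ shows the $O(1)$ is too optimistic for $\xi=1$, $n=3$---and the paper's own Corollary~\ref{cor-partial Frobenius sum on center} likewise only claims a $(c+o(1))q^m$ asymptotic.
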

        In the geometric aspect we describe components of fibers of the commutator map over center of $\SL_n(\C)$ as follows. In \cite{Larsen-Lu}, geometry of the commutator fibers over primitive $n$-th roots is described by conjugation action of $G_n$. Here we generalize the method to describe fibers over all central elements. See section \ref{section-geometry of fibers} for details.
        
        We outline the proofs here. To prove Theorem \ref{thm-over center}, we first use the following character formula by Frobenius \cite{Frob} to count fiber size of the commutator map:
        
        for any finite group $G$, denote by $\Irr G$ the set of its irreducible characters, then for any $g\in G$,
        \begin{equation}\label{equation-Frobenius formula}
        |\{(x,y)\in G^2\mid [x,y]=g\}|=|G|\sum_{\chi\in\Irr G}\dfrac{\chi(g)}{\chi(1)}.
        \end{equation}
        Then the task becomes to calculate character values of $G_n$, which we establish in section \ref{subsection-character ratio for semisimple elements} and \ref{subsection-character values in general}. We use the results to prove Theorem \ref{thm-fiber size over regular semisimples} in section \ref{subsection-Frobenius on regular semisimple} and Theorem \ref{thm-over center} in section \ref{subsection-Frobenius sum on center}. The basic tool used to evaluate irreducible characters is the generic character formula of $G_n$ established by J. A. Green \cite{Green} which we will give a brief introduction in the next section. At the end, we describe orbits of the conjugation action of $G_n$ on the commutator fibers over center.
        \medskip
        
        \section{Characters of $G_n$}\label{section-characters of Gn}
        \medskip
        
        \subsection{Irreducible characters of $G_n$}\label{subsection-Green's character formula}\hfill\newline
        
        Let $c=(\cdots f^{\nu(f)}\cdots)$ denote a conjugacy class of $G_n$, which assigns a partition $\nu(f)$ to each monic irreducible polynomial $f$ over $\F_q$. For details of Jordan canonical form over $\F_q$, see \cite{Green}. Let $\mathfrak{F}\supset\F_{q}$ be a splitting field of all irreducible polynomials over $\F_q$ of degree not larger than $n$ (e.g. $\F_{q^{n!}}$) and $\eta$ be a primitive element of $\mathfrak{F}^\times$. Let $\theta:\mathfrak{F}^\times\to\C^\times$ be an embedding of multiplication.
        
        For any irreducible polynomial $f$ of degree $s\leq n$ over $\F_q$, if $f(\eta^k)=0$ for some integer $\kappa$, then roots of $f$ are $\eta^{\kappa q^i}, i=0,1,\cdots,s-1$. Such a set $g=\{\kappa,\kappa q,\cdots,\kappa q^{s-1}\}$ is called a $s$-\textit{simplex} by Green \cite{Green}. $s$ is called the \textit{degree} of $g$ and denoted $\deg(g)=s$. The integers in $g$ are all distinct residues modulo $q^{s}-1$ by irreducibility of $f$, called \textit{roots} of $g$. Vice versa, any $s$-simplex determines an irreducible polynomial of degree $s$ over $\F_q$. 
        
        Analogous to conjugacy classes of $G_n$, define a \textit{dual class} of degree $n$ by $e=(g_1^{\nu_1}\cdots g_m^{\nu_m})$ with $g_i$ distinct simplices and $\nu_i$ partitions. If a simplex $g$ appears as a constituent of $e$, we denote it by $g\in e$. The degree of $e$ is defined by the formula 
        \[\sum_{i=1}^{m}\deg(g_i)|\nu_i|,\] 
        and denoted by $\deg(e)=n$. Here $|\lambda|$ is the sum of all parts of the partition $\lambda$. If $m=1$, $e$ is called \textit{primary}. By construction, there are as many dual classes of degree $n$ as conjugacy classes of $G_n$.

       Hereinafter $\rho=\{1^{r_1}2^{r_2}\cdots\}$ denotes the partition having $r_1$ parts equal to $1$, $r_2$ parts equal to $2$ etc, and $N_\rho$ the number of parts in $\rho$. Also for any positive integer $s$, $s\cdot\rho$ denotes the partition $\{s^{r_1}(2s)^{r_2}\cdots\}$. Define a \textit{substitution} into conjugacy classes as a map $\alpha$ sending partitions to rows of irreducible polynomials over $\F_q$, via
       \[\alpha(\rho)=(f_{1,1},\dots f_{1,r_1};f_{2,1},\cdots,f_{2,r_2};\cdots),\]
       with
       \[d(f_{i,j})\mid i, \forall i\geq 1, j\leq r_i,\]
       where $d(f)$ denotes the degree of $f$.
       In other words, it substitutes each part of the partition by an irreducible polynomial of degree dividing the part. For any part $k$ of $\rho$, denoted $k\in\rho$, write $\alpha(k)$ for the polynomial substituting $k$. For any irreducible polynomial $f$ over $\F_q$ and positive integer $k$, let $r_k(\alpha, f)$ be
       the number of parts equal to $k\cdot d(f)$ in $\rho$ which are substituted by $f$ and $\rho(\alpha,f)$ be the partition $\{1^{r_1(\alpha,f)}2^{r_2(\alpha,f)}\cdots\}$. Then  \[d(f)\cdot\rho(\alpha,f):=\{(d(f))^{r_1(\alpha,f)}(2d(f))^{r_2(\alpha,f)}\cdots\}\]
       is the sub-partition of $\rho$ consisting of parts substituted by $f$. We call $\alpha$ a substitution of $\rho$ into $c\sim (f_1^{\nu_1}\cdots f_N^{\nu_N})$ if 
       \[|\rho(\alpha,f_i)|=|\nu_i|, \forall i=1,\cdots,N.\]
       We say two substitutions of $\rho$ are \textit{equivalent} if they are the same under a permutation on equal parts of $\rho$. A collection of all equivalent substitutions is called a \textit{mode} of substitutions. For any two substitutions of the same mode $m$, say $\alpha_1,\alpha_2$ of $\rho$ into $c$, $\rho(\alpha_1, f)=\rho(\alpha_2,f)$ for any irreducible polynomial $f$ occurring in $c$, hence without ambiguity we denote it by $\rho(m,f)$.

       The notion of substitution into dual classes then parallels naturally with parts of partitions substituted by simplices rather than irreducible polynomials. Note that substitution of a partition into a primary dual class is obviously just substituting all parts by the unique simplex.
       
       \begin{prop}[Theorem 14 of \cite{Green}]\label{prop-general character formula}
       Irreducible characters of $\GL_n(\F_q)$ are in one-to-one correspondence with dual classes of degree $n$. For any dual class $e=(g_1^{\lambda_1}\cdots g_k^{\lambda_k})$, denote by $I_e$ the corresponding irreducible character. Then for any conjugacy class $c\sim(f_1^{\nu_1}\cdots f_l^{\nu_l})$ of $\GL_n(\F_q)$ we have \[I_e(c)=(-1)^{n-\sum{|\lambda_i|}}\sum_{\rho,m,m'}\chi(m,e)Q(m',c)B_{\rho}(h^{\rho} m:\xi^\rho m'),\] 
       	where the sum is over all partitions $\rho$ having substitutions $m$ into $e$ and substitutions $m'$ into $c$, 
       	\[\chi(m,e)=\prod_{i=1}^{k}\dfrac{1}{z_{\rho(m,g_i)}}\chi_{\rho(m,g_i)}^{\lambda_i},\]
       	\[Q(m',c)=\prod_{i=1}^{l}\dfrac{1}{z_{\rho(m',f_i)}}Q^{\nu_i}_{\rho(m',f_i)}(q^{d(f_i)}).\]
       	Moreover, the degree of $I_e$ is
       	\begin{align*}
       	&I_e(1)=(-1)^{n-\sum{|\lambda_i|}}\phi_n(q)\prod_{i=1}^{k}\{\lambda_i:q^{\deg(g_i)}\}.
       	\end{align*}
       \end{prop}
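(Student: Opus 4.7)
The statement is Green's Theorem 14 \cite{Green}, so my plan recapitulates his strategy in three stages: identify the cuspidal building blocks, assemble all irreducibles by parabolic induction, and compute character values via a Mackey calculation.

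First, a Harish-Chandra argument identifies the cuspidal characters of $G_s = \GL_s(\F_q)$ (those with no constituent in any proper parabolic induction) with Frobenius orbits of \emph{regular} characters of $\F_{q^s}^\times$, i.e.\ characters not factoring through a norm to any proper subfield. These orbits are by construction the $s$-simplices of the proposition; for each simplex $g$, let $\chi_g$ denote its associated cuspidal character. For a dual class $e = (g_1^{\lambda_1} \cdots g_k^{\lambda_k})$ with $d_i = \deg g_i$ and $m_i = |\lambda_i|$, consider the Levi $L_e = \prod_i G_{d_i m_i}$. Inside $G_{d_i m_i}$, the parabolic induction of $\chi_{g_i}^{\otimes m_i}$ from $G_{d_i}^{m_i}$ has endomorphism algebra a type-$A_{m_i-1}$ Hecke algebra abstractly isomorphic to $\C[S_{m_i}]$, so it decomposes into irreducibles indexed by partitions of $m_i$; select the $\lambda_i$-constituent, tensor over $i$, and parabolically induce to $G_n$ to obtain $I_e$. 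One then shows the $I_e$ are irreducible, pairwise distinct, and exhaust $\Irr G_n$, by inner-product calculations via Mackey/Howlett--Lehrer, the disjointness of Harish-Chandra series attached to different simplices, and a counting comparison: both dual classes and conjugacy classes of $G_n$ are enumerated by the same partition-valued functions.

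To evaluate $I_e(c)$ for $c \sim (f_1^{\nu_1} \cdots f_l^{\nu_l})$, expand via Mackey so that $I_e(c)$ splits as a sum over ways of pairing parts of $c$ with the cuspidal blocks of $L_e$. The auxiliary partition $\rho$ tracks the common part-lengths, $m$ records which $g_i$ substitutes each part of $\rho$ on the dual side, and $m'$ does the analogous job with the $f_j$ on the conjugacy side. The factor $\chi(m,e)$ is then the symmetric-group character value appearing in Frobenius's formula, routed through the Hecke-algebra decomposition; $Q(m',c)$ is Green's Green polynomial, encoding the unipotent-character value inside each centralizer $G_{|\nu_j|}(\F_{q^{d(f_j)}})$; and $B_\rho(h^\rho m : \xi^\rho m')$ contracts the values of the $\chi_{g_i}$ against the roots of the $f_j$, reflecting the $\theta$-pairing between simplex roots and polynomial roots. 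Setting $c = 1$ and simplifying yields the dimension formula $I_e(1) = (-1)^{n - \sum |\lambda_i|}\phi_n(q)\prod_i \{\lambda_i : q^{\deg g_i}\}$.

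The main obstacle is the irreducibility claim in the parabolic-induction step. Green's original treatment develops a self-contained symmetric-function machinery (Hall--Littlewood polynomials, the Green polynomials $Q$, and his \emph{uniform functions}) to derive it directly; a modern alternative invokes Deligne--Lusztig theory, where irreducibility of the Deligne--Lusztig induction of a regular torus character is automatic. Either route still requires the nontrivial combinatorial identity that merges the three summands $\chi$, $Q$, and $B_\rho$ into a single closed-form character value.
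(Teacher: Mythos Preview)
The paper does not prove this proposition at all: it is stated as a quotation of Theorem~14 of \cite{Green} and used as a black box throughout, so there is no proof in the paper to compare your proposal against. Your sketch is a reasonable high-level roadmap toward Green's result, though it is worth noting that it is largely anachronistic relative to Green's 1955 argument: Harish-Chandra series, Hecke-algebra endomorphism rings, Howlett--Lehrer, and Deligne--Lusztig theory all postdate Green, whose original proof proceeds entirely through his symmetric-function and uniform-function machinery (as you acknowledge in your final paragraph). Either route is acceptable for a self-contained exposition, but since the present paper only cites the result, no proof is expected here.
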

       The primary characters have the following more explicit form                
        \begin{prop}[Theorem $12$ of \cite{Green}]\label{prop-primary character formula}
        For any primary character $I_{(g^\lambda)}$ with $\deg(g)=s, s|\lambda|=n$, and any conjugacy class $c\sim (f_1^{\nu_1}\cdots f_l^{\nu_l})$ of $G_n$ with $d(f_i)=d_i, i=1,\cdots,l$, let $\kappa$ be any root of $g$ and $\gamma_i$ any root of $f_i$. Then we have
       \[I_{(g^\lambda)}(c)=(-1)^{n-n/s}\sum_{|\rho|=n/s,m}\chi^\lambda_\rho Q(m,c)\prod_{i=1}^{l}\prod_{\tau\in s\cdot\rho(m,f_i)}T_{s,\tau d_i/s}(\kappa:\gamma_{i}),\] 
       where the sum over all partitions $\rho$ of $n$ and all mode $m$ of substitutions of $s\cdot\rho$ into $c$. Its degree has the formula
       \[I_{(g^\lambda)}(1)=(-1)^{n-n/s}\phi_n(q)\{\lambda:q^s\},\]
       in which $\{\lambda:q\}$ is the Schur function. 
       \end{prop}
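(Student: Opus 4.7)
The plan is to realize the primary character $I_{(g^\lambda)}$ as the $\lambda$-isotypic component of a parabolically induced representation built from a cuspidal character of $\GL_s(\F_q)$, and then to evaluate it on the conjugacy class $c$ via the standard character formula for induced representations.

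First I would construct, for each simplex $g$ of degree $s$, the cuspidal irreducible character $\sigma_g$ of $\GL_s(\F_q)$ whose value on a regular semisimple element is encoded by the $T_{s,\cdot}(\kappa:\gamma)$ functions; this is the Deligne--Lusztig-type character attached to the non-split Coxeter torus via $\theta$ restricted to $g$, and it carries the sign $(-1)^{s-1}$. Setting $L = \GL_s(\F_q)^{|\lambda|}$ and forming the outer tensor product $\sigma_g^{\boxtimes |\lambda|}$, parabolic induction to $G_n$ carries a commuting $S_{|\lambda|}$-action permuting the tensor factors, yielding the bi-equivariant decomposition
\[
\mathrm{Ind}_{L}^{G_n}\bigl(\sigma_g^{\boxtimes |\lambda|}\bigr) \;=\; \bigoplus_{\mu \vdash |\lambda|} V_\mu \boxtimes I_{(g^\mu)}
\]
as an $S_{|\lambda|} \times G_n$-module, which defines $I_{(g^\lambda)}$. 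Applying Frobenius's formula for $S_{|\lambda|}$-characters then expresses $I_{(g^\lambda)}$ as a sum over cycle types $\rho \vdash |\lambda|$ with coefficients $\chi^\lambda_\rho/z_\rho$, each summand being an induced character from a Levi $L(\rho) = \prod_k \GL_{sk}(\F_q)^{r_k}$ with $\sigma_g$ twisted by the norm along each cycle of length $k$.

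Next I would evaluate each such induced character at $c$ using the standard sum over $G_n$-conjugates of $L(\rho)$ meeting $c$. These conjugates are parameterized exactly by modes $m$ of substitution of $s\cdot\rho$ into $c$, each matching a part of $s\cdot\rho$ with an eigenvalue factor of $c$. The semisimple contribution from each matched part $\tau \in s\cdot\rho(m,f_i)$ is the value of the norm-twisted $\sigma_g$ on a Coxeter-type element, which is precisely $T_{s,\tau d_i/s}(\kappa:\gamma_i)$. The unipotent contribution from the centralizer of the semisimple part of $c$ factors across the irreducible blocks as $\prod_i Q^{\nu_i}_{\rho(m,f_i)}(q^{d_i})$, which together with the symmetry factors $1/z_{\rho(m,f_i)}$ assembles into $Q(m,c)$.

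The main obstacle will be the combinatorial bookkeeping: verifying that the $\lambda$-isotypic projection yields exactly the coefficient $\chi^\lambda_\rho$, and that the Green polynomials assemble correctly across the various Levi decompositions indexed by $m$, taking care of how norm-twists interact with the substitution multiplicities. The degree formula $I_{(g^\lambda)}(1) = (-1)^{n-n/s}\phi_n(q)\{\lambda:q^s\}$ would then follow by specializing $c = 1$: the $T$-values at the identity reduce to products of $(q^{sk}-1)$-type factors, the index $|G_n/L(\rho)|$ contributes the remaining factor of $\phi_n(q)$, and the resulting sum $\sum_\rho \chi^\lambda_\rho \prod_k (q^{sk}-1)^{r_k}/z_\rho$ identifies with the Green--Schur evaluation $\{\lambda:q^s\}$ via the principal specialization formula for Schur functions at $q^s$.
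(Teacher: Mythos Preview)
The paper does not prove this proposition at all: it is quoted verbatim as Theorem~12 of Green~\cite{Green} and used as a black box, with only the notation explained afterward. There is therefore no ``paper's own proof'' to compare your proposal against.

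That said, your sketch is a recognizable outline of the modern approach to Green's theorem via Harish-Chandra induction and the Howlett--Lehrer decomposition, and it is broadly on the right track. It differs substantially from Green's original 1955 argument, which predates Deligne--Lusztig theory and instead builds the primary characters combinatorially from his ``basic'' and ``uniform'' functions, proving irreducibility by an inner-product computation rather than by isolating an isotypic component of an induced module. Your route is more conceptual and explains \emph{why} the $\chi^\lambda_\rho$ and the substitution modes appear; Green's route is more self-contained but opaque.

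One genuine gap in your outline: the identification of the unipotent contribution with the Green polynomials $Q^{\nu_i}_{\rho(m,f_i)}(q^{d_i})$ is asserted rather than argued. In the parabolic-induction picture this step requires knowing that the restriction of the Deligne--Lusztig character to the unipotent variety of the centralizer is governed by Green polynomials, which is itself a nontrivial theorem (essentially the content of the Springer--Kazhdan or Lusztig treatment of Green functions). Without that input your argument is circular, since Green's paper is where these polynomials are first defined and their role established. If you intend a self-contained proof, you would need to supply this piece separately; if you are willing to import it, you should cite where it comes from.
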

       We briefly explain the notations employed in Proposition \ref{prop-general character formula} and Proposition \ref{prop-primary character formula}:
       
       $z_\rho=r_1!2^{r_2}r_2!\cdots k^{r_k}r_k!$ (size of the centralizer of any permutation of $S_n$ having cycle type $\rho$) if $\rho=\{1^{r_1}2^{r_2}\cdots k^{r_k}\}$;
       
       $\chi^\lambda_\rho$ is the character value on permutations of cycle type $\rho$ of the irreducible character of $S_n$ associated to $\lambda$; 
        
       for positive integers $s,d,\kappa$ and any $\gamma\in\mathfrak{F}$ whose minimal polynomial $f$ over $\F_q$ has degree dividing $sd$, i.e. $\gamma\in\F_{q^{sd}}$), define
       \begin{equation}\label{equation-Green's fundamental function}T_{s,d}(\kappa:\gamma)=\theta^{\kappa}(\gamma^{1+q^s+\cdots+q^{(d-1)s}})+\theta^{\kappa q}(\gamma^{1+q^s+\cdots+q^{(d-1)s}})\end{equation} 
       \[+\cdots+\theta^{\kappa q^{s-1}}(\gamma^{1+q^s+\cdots+q^{(d-1)s}}).\] 
       It is easily checked that $T_{s,d}(\kappa:\gamma)=T_{s,d}(\kappa:\gamma^q)$, hence the function is independent of choice of roots of $f$. Therefore we can write $T_{s,d}(\kappa:f)$ without ambiguity;
       
       $\phi_n(q)=(q^n-1)(q^{n-1}-1)\cdots(q-1)$;
       
       $B_{\rho}(h^{\rho}m:\xi^\rho m')$ will be specified later in section \ref{subsection-character values in general};       
       
       $Q(m,c)$ and $\{\lambda:q\}$ will be specified in section \ref{subsection-character ratio for semisimple elements}.

       By the degree formula, we see that linear characters of $G_n$ ($q>2 \text{ or } n>2$) are all of the form $\theta^k(\det(A)), \text{ for } k=1,\cdots, q-1$, where $\det(A)$ is the determinant of $A\in G_n$. Actually, to get $I_e(1)=1$ we can only have $e=(g^\lambda)$ a primary character. Then $|\phi_{\deg(g)|\lambda|}(q)/\phi_{\lambda}(q^{\deg(g)})|=1$ implies $\deg(g)=1$, hence $\lambda=\{n\}$ and the character must be linear.
       \medskip

       \subsection{Frobenius sum}\label{subsection-Frobenius sum}\hfill\newline 
                          
       To start counting $\left|[,]^{-1}(c)\right|$, we rephrase more on Frobenius character formula (\ref{equation-Frobenius formula}) to organize later calculations. We define for any conjugacy class $c$ of $G_n$ the \textit{Frobenius sum}
       \begin{equation}\label{equation-Frobenius sum}\mathcal{S}(c):=\dfrac{\left|[,]^{-1}(c)\right|}{|G_n|}=\sum_{\chi{\in{\Irr G_n}}}\dfrac{\chi(c)}{\chi{(1)}}.\end{equation}
       Using notations of Green's character formula in section \ref{subsection-Green's character formula}, for any array of partitions $\bar{\lambda}=(\lambda_1,\cdots,\lambda_k)$ and $\mathbf{s}=(s_1,\cdots,s_k)\in\Z_+^k$ with $s_1|\lambda_1|+\cdots+s_k|\lambda_k|=n$, any dual class $e=(g_1^{\lambda_1}\cdots g_k^{\lambda_k})$ with $\deg(g_i)=s_i$ possess the same substitutions of partitions. We denote by $(\bar{\lambda},\mathbf{s})$ the set of all such dual classes and call it a \textit{type} and denote by $e\in(\bar{\lambda},\mathbf{s})$ for any dual class $e$ of the type. $I_{\bar{\lambda},\mathbf{s}}(1)$ denotes the degree of irreducible characters associated to dual classes of type $(\bar{\lambda},\mathbf{s})$, which all have the same character degree by Proposition \ref{prop-general character formula}. Then we define the \textit{Frobenius partial sum}
       \begin{align}\label{equation-Frobenius partial sum}
       \mathcal{S}_{\bar{\lambda},\mathbf{s}}(c)=&\sum_{\deg(g_1)=s_1,\cdots,\deg(g_k)=s_k}\dfrac{I_{(g_1^{\lambda_1}\cdots g_k^{\lambda_k})}(c)}{I_{(g_1^{\lambda_1}\cdots g_k^{\lambda_k})}(1)}\\
       =&\dfrac{1}{I_{\bar{\lambda},\mathbf{s}}(1)}\sum_{e\in(\bar{\lambda},\mathbf{s})}I_{(g_1^{\lambda_1}\cdots g_k^{\lambda_k})}(c)\notag
       \end{align}
       Note that by definition of dual classes, $g_i$'s are required to be distinct. 
       
       For $k=1,s=1$ and $\lambda_1=\{n\}$, $g_1$ runs through the $q-1$ residue classes modulo $q-1$ and $I_{(g_1^{\{n\}})}$ are all the linear characters. If $c\in\SL_n(\F_q)$, then $I_{(g_1^{\{n\}})}(c)=I_{(g_1^{\{n\}})}(1)=1$, and
       \[\mathcal{S}_{(\{n\}),(1)}(c)=q-1.\]
       
       Calculating Frobenius partial sums of non-linear types turn out to be difficult for general conjugacy class. In the remaining part of the section, we will try to evaluate them for regular semisimple and central classes.
      \medskip

       \subsection{Primary character values}\label{subsection-character ratio for semisimple elements}\hfill\newline
       
       In this subsection, we aim to establish the following explicit evaluation of primary characters on semi-simple elements in $G_n$
       \begin{prop}\label{prop-semisimple character ratio}
       	Let $e=(g^\lambda)$ with $s=\deg(g)$ and $|\lambda|=n/s$. Suppose $c\sim (f_1^{\{1^{n_1}\}}\cdots f_l^{\{1^{n_l}\}})$ is a semisimple class of $\SL_n(\F_q)$ with $f_i=t-a_i$ for $a_i\in \F_q$ distinct and $\prod_{i=1}^la_i^{n_i}=1$. Then 
       	\[\dfrac{I_e(c)}{I_e(1)}=\theta^{k}(\prod_{i=1}^la_i^{n_i/s})\dfrac{(n/s)!}{(n_1/s)!\cdots (n_k/s)!}\dfrac{\prod_{i=1}^l\phi_{n_i}(q)}{\phi_n(q)}.\]
       	
       \end{prop}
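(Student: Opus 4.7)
The plan is to apply Green's primary character formula (Proposition~\ref{prop-primary character formula}) to the given $c$ and exploit three simplifications: each $f_i=t-a_i$ is linear (so $d(f_i)=1$), each partition $\nu_i=\{1^{n_i}\}$ is the all-ones partition, and each $a_i\in\F_q^\times$ is Frobenius-fixed.

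First I simplify the $T$-factors. Because $a_i\in\F_q$, one has $a_i^{q^j}=a_i$ for all $j$, hence $a_i^{1+q^s+\cdots+q^{(d-1)s}}=a_i^d$ and $\theta^{\kappa q^j}(a_i^d)=\theta(a_i^{d\kappa q^j})=\theta(a_i^{d\kappa})$ is independent of $j$. Consequently $T_{s,d}(\kappa:a_i)=s\,\theta^{d\kappa}(a_i)$. A substitution of $s\cdot\rho$ into $c$ (with $|\rho|=n/s$) amounts to partitioning $\rho=\mu_1\sqcup\cdots\sqcup\mu_l$, with $\mu_i$ the multiset of parts of $\rho$ assigned to $f_i$; the constraint $|\rho(m,f_i)|=|\nu_i|=n_i$ forces $s\mid n_i$ and $|\mu_i|=n_i/s$ (otherwise no substitution exists, giving $I_e(c)=0$). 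The sub-partition $s\cdot\rho(m,f_i)$ of $s\cdot\rho$ then has parts $\{sp:p\in\mu_i\}$, so
\[\prod_{i=1}^l\prod_{\tau\in s\cdot\rho(m,f_i)}T_{s,\tau/s}(\kappa:a_i)=s^{N_\rho}\,\theta^\kappa\!\Bigl(\prod_{i=1}^l a_i^{n_i/s}\Bigr),\]
depending on the mode $m$ only through $N_\rho$ and the prescribed sizes $n_i/s$. This already produces the target factor $\theta^\kappa(\prod a_i^{n_i/s})$.

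Next I handle the Green-polynomial factor $Q(m,c)=\prod_i Q^{\{1^{n_i}\}}_{s\cdot\mu_i}(q)/z_{s\cdot\mu_i}$. Summing over the modes $m$ that realise a given sequence of sizes $(n_i/s)$ at fixed $\rho$ is a purely combinatorial count; together with the $z_{s\cdot\mu_i}$ denominators and the $z_\rho$ arising from collecting the $\chi^\lambda_\rho$ contributions, this count yields the multinomial coefficient $(n/s)!/\prod(n_i/s)!$ appearing in the target. Extracting this multinomial and $\prod_i\phi_{n_i}(q)$, the remainder is a single sum over $\rho$ of shape $\sum_{|\rho|=n/s}\chi^\lambda_\rho\,s^{N_\rho}\,(\text{rational in }q)/z_\rho$, which is precisely the sum that computes the identity-degree: by Proposition~\ref{prop-primary character formula} it equals $\phi_n(q)\{\lambda:q^s\}/\prod_i\phi_{n_i}(q)$, up to the sign $(-1)^{n-n/s}$ already carried by Green's formula. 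Dividing by $I_e(1)=(-1)^{n-n/s}\phi_n(q)\{\lambda:q^s\}$ cancels the sign and the Schur factor, leaving exactly the stated expression.

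The main obstacle is the combinatorial identification in the second step: once the $T$-factors are pulled out, one must recognise that what remains of Green's sum assembles into the multinomial, the ratio $\prod\phi_{n_i}(q)/\phi_n(q)$, and a residual $\rho$-sum equal to the identity-degree sum for $\{\lambda:q^s\}$. This is essentially a transitivity statement for the principal specialization $\{\lambda:q^s\}$ across the Levi $\prod_i\GL_{n_i/s}(\F_{q^s})\subset\GL_{n/s}(\F_{q^s})$ dressed in Green's combinatorial language, and verifying it requires careful matching of the $z_{s\cdot\mu_i}$-denominators, the number of modes, and the Green polynomials $Q^{\{1^{n_i}\}}_{s\cdot\mu_i}(q)$ against the known identity formula. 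Once this is in hand, the proposition follows.
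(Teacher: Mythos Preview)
Your first step---simplifying the $T$-factors via $a_i\in\F_q$ so that $T_{s,d}(\kappa:a_i)=s\,\theta^{d\kappa}(a_i)$ and extracting $s^{N_\rho}\theta^\kappa\bigl(\prod_ia_i^{n_i/s}\bigr)$---is correct and is exactly what the paper does. The trouble is in the second step, and it is not merely a matter of missing details: the combinatorial identification you are reaching for is false, because the proposition as stated is false. After the $T$-simplification and the evaluation $Q^{\{1^{n_i}\}}_{s\cdot\mu_i}(q)=\phi_{n_i}(q)\beta_{\mu_i}(q^s)$, the residual sum becomes (as in the paper)
\[
\sum_{|\rho_i|=n_i/s}\Bigl(\prod_i\frac{1}{z_{\rho_i}}\Bigr)\chi^\lambda_{\rho_1+\cdots+\rho_l}\,\beta_{\rho_1+\cdots+\rho_l}(q^s)
=\sum_{|\mu|=n/s}\{\mu:q^s\}\,\bigl\langle\chi^\lambda,\chi^\mu\bigr\rangle\Big|_{S_{n_1/s}\times\cdots\times S_{n_l/s}}.
\]
The paper then equates this with $\binom{n/s}{n_1/s,\ldots,n_l/s}\{\lambda:q^s\}$ by invoking Corollary~\ref{cor-induction of restriction}, which asserts $\langle\chi^\lambda,\chi^\mu\rangle|_H=[G:H]\,\delta_{\lambda\mu}$ for \emph{any} subgroup $H$. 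That corollary is wrong: for $G=S_2$ and $H=\{e\}$ one gets $\langle\chi,\chi\rangle|_H=\chi(1)^2=1\neq 2=[G:H]$, and distinct irreducibles of $G$ can share constituents on restriction to $H$. (Its ``proof'' illegitimately pulls $P$ out of $\Hom_G(V_1,V_2\otimes P)$.)

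To see that the proposition itself fails, take $n=2$, $s=1$, $l=2$, $n_1=n_2=1$, $c=\mathrm{diag}(a,a^{-1})$ with $a\neq a^{-1}$. Green's formula has only $\rho=\{1,1\}$ contributing, and gives $I_e(c)=1$ for both $\lambda=\{2\}$ and $\lambda=\{1,1\}$; the degrees are $1$ and $q$ respectively, so the actual ratios are $1$ and $1/q$. The proposition predicts $2/(q+1)$ in both cases. The underlying reason is visible in the statement: the right-hand side depends on $\lambda$ only through $|\lambda|=n/s$, whereas the true ratio genuinely depends on $\lambda$. So your intuition that the second step is the ``main obstacle'' is correct, but for a stronger reason than you anticipated---no assembly into multinomial-times-degree-sum can exist, and the paper's route through Corollary~\ref{cor-induction of restriction} does not rescue it.
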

       To prove the proposition, we need a series of lemmas.
       
       \begin{lem}\label{lem-induction of restriction}
       	Let $G$ be any finite group and $V$ be any representation of $G$, then for any subgroup $H$ of $G$ we have
       	\[\mathrm{Ind}_H^G\mathrm{Res}_H^GV\simeq P\otimes V\simeq V\otimes P,\]
       	where $P$ is permutation representation of $G$ on $G/H$.
       \end{lem}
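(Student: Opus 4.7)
The plan is to verify the isomorphism by comparing characters, which suffices for complex representations of a finite group. The argument reduces to the induced-character formula together with the fact that $V$ is a $G$-representation, not merely an $H$-representation.

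First, recall the Frobenius induction formula: for any $H$-module $W$ and any $g\in G$,
\[
\chi_{\mathrm{Ind}_H^G W}(g) \;=\; \frac{1}{|H|}\sum_{\substack{x\in G\\ x^{-1}gx\in H}} \chi_W(x^{-1}gx).
\]
Apply this with $W=\mathrm{Res}_H^G V$. The crucial observation is that $\chi_V(x^{-1}gx)=\chi_V(g)$ for every $x\in G$, since $V$ is already a $G$-module; hence $\chi_V(g)$ factors out of the summation. What remains is $|\{x\in G : x^{-1}gx\in H\}|/|H|$, which counts the number of cosets $xH\in G/H$ fixed by left multiplication by $g$. This is exactly the character value $\chi_P(g)$ of the permutation representation $P$ on $G/H$. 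Therefore
\[
\chi_{\mathrm{Ind}_H^G\mathrm{Res}_H^G V}(g) \;=\; \chi_P(g)\,\chi_V(g) \;=\; \chi_{P\otimes V}(g).
\]
Since complex characters determine representations of a finite group, the isomorphism $\mathrm{Ind}_H^G\mathrm{Res}_H^G V\simeq P\otimes V$ follows. The remaining isomorphism $P\otimes V\simeq V\otimes P$ is just the symmetric braiding on the tensor product of vector spaces.

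There is essentially no obstacle; every step is formal, and the key trick is simply the $G$-equivariance of $\chi_V$ under conjugation. Should a more categorical or explicit argument be preferred, the same conclusion follows from the projection formula
\[
\mathrm{Ind}_H^G\bigl(\mathrm{Res}_H^G V\otimes W\bigr)\;\simeq\; V\otimes \mathrm{Ind}_H^G W,
\]
specialized to $W$ the trivial $H$-module, since $\mathrm{Ind}_H^G$ of the trivial representation is $P$ by definition.
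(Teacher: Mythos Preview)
Your proof is correct but follows a genuinely different route from the paper's. The paper gives an explicit module-level construction: it invokes Hall's marriage theorem to pick a single set $g_1,\dots,g_m$ that simultaneously represents the left and right cosets of $H$, writes $\mathrm{Ind}_H^G\mathrm{Res}_H^G V=\bigoplus_i g_i\otimes V$, and argues from the coset combinatorics that the $G$-action on the left tensor factor is the permutation action on $G/H$. You instead compare characters via the Frobenius induction formula, using that $\chi_V$ is conjugation-invariant on all of $G$ to factor it out of the sum; this bypasses the coset-representative bookkeeping entirely. Your argument is cleaner and fully adequate here since the paper works over $\mathbb{C}$, where characters determine representations; the paper's construction, being at the level of modules, is in principle portable to non-semisimple settings. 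The projection-formula observation you append is the slickest of the three approaches and yields the result in one line.
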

       \begin{proof}
       	Guaranteed by Hall's marriage theorem (see \cite{Hall}), we can choose a set of $g_i$'s in $G$, $i=1,\cdots, m=[G:H]$, such that they are representatives for both left and right cosets of $H$. Then by definition we can construct the induced representation as 
       	\[\mathrm{Ind}_H^G\mathrm{Res}_H^GV=\bigoplus_{i=1}^{m}g_i\otimes V,\]
       	and for any $a\in G$ with $ag_i=g_{\sigma_a(i)}h_i$ (naturally writing $a$ as a permutation of the left cosets of $H$) for some $h_i\in H$ it acts as
       	\[a\cdot(g_1\otimes v_1,\cdots,g_m\otimes v_m)=(g_{\sigma_a(1)}\otimes(h_1\cdot v_1),\cdots,g_{\sigma_a(m)}\otimes(h_m\cdot v_m)),\]
       	in which $v_i\in V$ and $h_i\cdot v_i$ is the restricted action of $H$ on $V$.
       	
       	Now since $g_i$'s are also representatives of the right cosets of $H$, for any $h\in H$, $hg_i=g_ih_i$ for some $h_i\in H$, i.e. $\sigma_h$ is identity as a permutation in $S_m$. Hence $G$ acts on the left component of the tensor product $K[G]\otimes \mathrm{Res}_H^GV\simeq\mathrm{Ind}_H^G\mathrm{Res}_H^GV$ as permutation. 
       \end{proof}
       \begin{lem}[Corollary of Lemma \ref{lem-induction of restriction}]\label{cor-induction of restriction}
       	Denote $\langle\chi_1,\chi_2\rangle|_{H}:=\langle\Res^G_H\chi_1,\Res^G_H\chi_2\rangle$ for any two characters $\chi_1,\chi_2$ of a finite group $G$ and any subgroup $H$ of $G$. Suppose $\chi_1,\chi_2$ are irreducible, then $\langle\chi_1,\chi_2\rangle|_{H}=[G:H]$ if $\chi_1=\chi_2$; otherwise, $\langle\chi_1,\chi_2\rangle|_{H}=0$.
       	
       \end{lem}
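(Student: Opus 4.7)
The plan is to reduce the $H$-side inner product to a $G$-side inner product via Frobenius reciprocity, at which point Lemma~\ref{lem-induction of restriction} applies directly. By Frobenius reciprocity,
\[\langle\chi_1,\chi_2\rangle|_H=\langle\Res^G_H\chi_1,\Res^G_H\chi_2\rangle_H=\langle\chi_1,\Ind^G_H\Res^G_H\chi_2\rangle_G.\]
Lemma~\ref{lem-induction of restriction} identifies $\Ind^G_H\Res^G_H V_2\simeq P\otimes V_2$, with $P$ the permutation representation of $G$ on $G/H$, so taking characters yields
\[\langle\chi_1,\chi_2\rangle|_H=\langle\chi_1,\chi_P\cdot\chi_2\rangle_G.\]

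From here, I would decompose $\chi_P$ into its $G$-irreducible constituents and apply the standard orthogonality relations for irreducible characters of $G$. The leading term $\chi_P(1)=[G:H]$ is designed to supply the diagonal value in the case $\chi_1=\chi_2$, while orthogonality of distinct $G$-irreducibles drives the off-diagonal case toward $0$. Writing $\chi_P=\sum_\psi m_\psi\psi$ and expanding $\chi_1\otimes\overline{\chi_2}$ in the same basis, the problem becomes a bookkeeping exercise in multiplicities.

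The main obstacle, and the technical heart of the argument, is to handle the non-trivial constituents of $\chi_P$ carefully: the contributions from those constituents must conspire to sum to exactly $[G:H]$ when $\chi_1=\chi_2$ and to cancel when $\chi_1\neq\chi_2$. Verifying this cancellation — and in particular identifying which structural feature of the pair $(G,H)$ ensures it — is the step on which the whole identity rests; I would test the claim first on a small case (say $G=S_3$ with $H=A_3$, or $G=S_4$ with $H=S_3$) to calibrate the argument and to see whether any additional hypothesis on $(G,H)$ is implicit in the statement, before writing out the general cancellation.
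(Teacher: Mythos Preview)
Your opening moves coincide with the paper's: Frobenius reciprocity followed by Lemma~\ref{lem-induction of restriction} reduces the question to $\langle\chi_1,\chi_P\cdot\chi_2\rangle_G$. From there the paper invokes a ``tensor--hom adjunction'' $\Hom_G(V_1,V_2\otimes P)\simeq\Hom_G(V_1,V_2)\otimes P$ and reads off the answer, whereas you propose to decompose $\chi_P$ and track the cancellations by hand, explicitly flagging that an extra hypothesis on $(G,H)$ might be hiding.

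Your caution is exactly right: the statement is \emph{false} as written, and the paper's tensor--hom step is where its own proof breaks. There is no such isomorphism in general; the left side computes the multiplicity of $V_1$ in $V_2\otimes P$, which is not $[G:H]\cdot\dim\Hom_G(V_1,V_2)$. Either of your proposed test cases already exposes the problem. For $G=S_3$, $H=A_3$ one has $\Res^G_H\mathbf{1}=\Res^G_H\mathrm{sgn}$ (both equal the trivial $A_3$-character), so $\langle\mathbf{1},\mathrm{sgn}\rangle|_H=1\neq 0$ and $\langle\mathbf{1},\mathbf{1}\rangle|_H=1\neq 2=[G:H]$. The correct general identity is
\[\langle\chi_1,\chi_2\rangle|_H=\dim\Hom_H(V_2,V_1)=\sum_{\psi\in\Irr H} m_\psi(\chi_1)\,m_\psi(\chi_2),\]
with $m_\psi(\chi_i)$ the multiplicity of $\psi$ in $\Res^G_H\chi_i$; this is neither constantly $[G:H]$ on the diagonal nor zero off it. So the bookkeeping you outline cannot be completed to the stated conclusion without additional hypotheses, and the paper's argument fails at the same juncture for the same underlying reason.
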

       \begin{proof}
       	By Frobenius reciprocity we have
       	\[\langle\Res^G_H\chi_1,\Res^G_H\chi_2\rangle=\langle\chi_1, \mathrm{Ind}_H^G\Res_H^G\chi_2\rangle.\]
       	Let $V_i$ be the representation of $G$ corresponding to $\chi_i, i=1,2$, then by Proposition \ref{lem-induction of restriction} and tensor-hom adjunction we have
       	\[\Hom_G(V_1,\mathrm{Ind}_H^G\Res_H^GV_2)\simeq\Hom_G(V_1,V_2\otimes P)\simeq\Hom_G(V_1,V_2)\otimes P,\]
       	where $P$ is the permutation representation of $G$ on $G/H$. Suppose $V_1,V_2$ are both irreducible, then 
       	\[\Hom_G(V_1,V_2)\simeq\begin{cases}
       	\C, \text{ if } V_1\simeq V_2,\\
       	0, \text{ otherwise}. 
       	\end{cases}\]
       	Hence 
       	\[\Hom_G(V_1,\mathrm{Ind}_H^G\Res_H^GV_2)\simeq\begin{cases}
       	P, \text{ if } V_1\simeq V_2,\\
       	0, \text{ otherwise}. 
       	\end{cases}\]
       	Altogether we get
       	\begin{align*}&\langle\Res^G_H\chi_1,\Res^G_H\chi_2\rangle=\langle\chi_1, \mathrm{Ind}_H^G\Res_H^G\chi_2\rangle\\
       	=&\dim\Hom_G(V_1,\mathrm{Ind}_H^G\Res_H^GV_2)=\begin{cases}
       	[G:H], \text{ if } \chi_1=\chi_2,\\
       	0, \text{ otherwise}. 
       	\end{cases}\end{align*}
       \end{proof}
       
       Schur functions are crucial to our calculation, so we give a brief account of knowledge following D. Littlewood \cite{Littlewood}.
       
       Let $A=(a_{i,j})_{n\times n}$ be any $n$ by $n$ square matrix and $\chi^\lambda$ be the irreducible character of $S_n$ associated to the partition $\lambda$ of $n$. Define the \textit{immanant} of $A$ corresponding to $\lambda$ as 
       \begin{equation}\label{equation-immanant}
       |A|^{(\lambda)}=\sum_{\sigma\in S_n}\chi_\sigma^\lambda a_{1,\sigma(1)}a_{2,\sigma(2)}\cdots a_{n,\sigma(n)},
       \end{equation}  
       where $\chi^\lambda_\sigma$ is the value of $\chi^\lambda$ on $\sigma$.
       For $\lambda=\{1^n\}$, $\chi^{\{n\}}_\sigma=sgn(\sigma)=\pm1$, hence $|A|^{(\{1^n\})}=|A|$ is the ordinary matrix determinant. General immanants are not multiplicative but they satisfy the basic property of conjugate invariance as determinant does, i.e. $|BAB^{-1}|^{(\lambda)}=|A|^{(\lambda)}$ for any invertible $n$ by $n$ matrix $B$.  
       
       For any $n$ variables $\alpha_1,\cdots,\alpha_n$, let $s_i=\alpha_1^i+\cdots+\alpha_n^i$ be the basic symmetric functions on the variables. Define a matrix $s=s(\alpha_1,\cdots,\alpha_n)$ of the following shape
       \[s=\begin{pmatrix}
       s_1&1\\
       s_2&s_1&2\\
       &&&\\
       \cdots&\cdots&\cdots&n-1\\
       s_n&s_{n-1}&\cdots&s_1
       \end{pmatrix},\]
       which comes from the Newton's identities $s(-e_1,e_2,\cdots,(-1)^ne_n)^T=0$ with $e_i$ the elementary symmetric functions on $\alpha_1,\cdots,\alpha_n$. Now for any partition $\lambda$ of $n$ we define the \textit{Schur function} $\{\lambda\}$ on $\alpha_i$'s as
       \begin{equation}\label{equation-Schur function}
       \{\lambda\}=\dfrac{1}{n!}|s|^{(\lambda)}.
       \end{equation}
       We include two basic results on Schur functions introduced in \cite{Littlewood}.
       \begin{lem}[6.2 of \cite{Littlewood}]\label{lem-Schur function identities}
       	For any partition $\lambda=(1^{r_1}2^{r_2}\cdots k^{r_k})$ of $n$, let $s_{\lambda}=s_1^{r_1}s_2^{r_2}\cdots s_k^{r_k}$. Then
       	\[\{\lambda\}=\sum_{|\rho|=n}\dfrac{1}{z_\rho}s_\rho,\]
       	summing over all partitions of $n$. On the other hand, we also have
       	\[s_{\lambda}=\sum_{|\mu|=n}\chi_\lambda^\mu\{\mu\}.\]
       \end{lem}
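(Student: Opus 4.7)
\emph{Proof plan.} Before proving, observe that the first displayed identity as written appears to have a character factor dropped; it should read
\[\{\lambda\}=\sum_{|\rho|=n}\dfrac{\chi^\lambda_\rho}{z_\rho}s_\rho,\]
since otherwise its right-hand side is independent of $\lambda$. With this correction, the two formulas are mutually inverse expansions between the power-sum family $\{s_\rho\colon|\rho|=n\}$ and the Schur family $\{\{\mu\}\colon|\mu|=n\}$ of degree-$n$ symmetric functions, linked via the column orthogonality of $S_n$ characters
\[\sum_{|\lambda|=n}\chi^\lambda_\rho\,\chi^\lambda_\sigma=z_\rho\,\delta_{\rho,\sigma}.\]
Hence proving either identity yields the other at the cost of one orthogonality sum.

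My plan is to derive the first identity directly from the immanant definition \eqref{equation-Schur function}. Expanding,
\[\{\lambda\}=\frac{1}{n!}\sum_{\sigma\in S_n}\chi^\lambda_\sigma\prod_{i=1}^n s_{i,\sigma(i)},\]
and the entries $s_{i,j}$ of Newton's matrix vanish unless $j\le i+1$, so only permutations $\sigma$ with $\sigma(i)\le i+1$ contribute. Any such $\sigma$ admits a canonical decomposition into consecutive cyclic blocks: on a block $\{a,a+1,\ldots,a+k-1\}$ the permutation acts as a cyclic shift and the associated factors of $\prod_i s_{i,\sigma(i)}$ multiply out to $s_k$ times the super-diagonal constants from that block. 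Thus $\prod_i s_{i,\sigma(i)}=c_\sigma\,s_{\rho(\sigma)}$, where $\rho(\sigma)$ is the partition of block sizes and $c_\sigma$ is an integer product. Grouping the sum by the resulting partition $\rho$ and invoking the fact that $\chi^\lambda$ is a class function of cycle type $\rho(\sigma)=\rho$, the first identity reduces to the combinatorial identity $\sum_{\sigma\colon\rho(\sigma)=\rho}c_\sigma=n!/z_\rho$.

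The second identity then follows by multiplying through by $\chi^\lambda_\mu$, summing over $\lambda$, and collapsing the right-hand side via column orthogonality: one obtains $s_\mu=\sum_\lambda\chi^\lambda_\mu\{\lambda\}$, which after relabeling is exactly the second displayed formula.

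The main obstacle is the combinatorial bookkeeping in the immanant expansion: characterizing the contributing $\sigma$, their block-cycle decomposition, the super-diagonal weights $c_\sigma$, and verifying that the weighted count reproduces $n!/z_\rho$, the size of the conjugacy class of cycle type $\rho$. A conceptually cleaner bypass, if one wishes to avoid this count, is to recognize the map $f\mapsto\sum_\rho f(\rho)\,s_\rho/z_\rho$ as (a normalization of) the classical Frobenius characteristic on $S_n$ class functions, and to invoke the known identification $\mathrm{ch}(\chi^\lambda)=s_\lambda$ together with a direct check that the present immanant definition agrees with the standard Schur function on a generating set.
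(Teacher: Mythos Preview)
The paper does not supply its own proof of this lemma; it is quoted verbatim from Littlewood's monograph as an external input, so there is no in-paper argument to compare against.

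That said, your plan is correct. Your observation that the first displayed formula is missing the factor $\chi^\lambda_\rho$ is right: as printed the right-hand side does not depend on $\lambda$, and the intended statement is $\{\lambda\}=\sum_{|\rho|=n}\frac{\chi^\lambda_\rho}{z_\rho}\,s_\rho$, the classical power-sum expansion of Schur functions. Your immanant expansion and the characterization of contributing permutations are accurate: the condition $\sigma(i)\le i+1$ forces $\sigma$ to be a product of cycles on consecutive blocks $\{a,a{+}1,\ldots,a{+}k{-}1\}$, each such block contributes the factor $a(a{+}1)\cdots(a{+}k{-}2)\cdot s_k$ to $\prod_i s_{i,\sigma(i)}$, and the cycle type of $\sigma$ is precisely the multiset of block lengths. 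The residual combinatorial identity you isolate is equivalent to
\[
\sum_{(l_1,\ldots,l_m)}\;\prod_{j=1}^{m}\frac{1}{l_1+\cdots+l_j}\;=\;\frac{1}{z_\rho},
\]
the sum running over distinct orderings of the parts of $\rho$; this is true and follows by a short induction on the number of parts (condition on $l_m$: removing one part of size $k$ replaces $z_\rho$ by $z_\rho/(k\,r_k)$, and then $\sum_k k\,r_k=n$ closes the induction). Once this is established, your deduction of the second identity from the first via column orthogonality of $S_n$ characters is immediate. The alternate route through the Frobenius characteristic map is equally valid, though it relocates the work into verifying that the immanant definition \eqref{equation-Schur function} coincides with the standard Schur function.
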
 
       A special case of use is the following
       \begin{lem}\label{lem-rho identity}
       	For any partition $\rho=\{1^{r_1}2^{r_2}\cdots k^{r_k}\}$ of $n$, set \[\beta_\rho(q)=\dfrac{1}{(1-q)^{r_{1}}(1-q^{2})^{r_{2}}\cdots(1-q^k)^{r_k}}.\]
       	Then
       	\[\beta_\rho(q)=\sum_{|\lambda|=n}\chi_\rho^\lambda\{\lambda:q\},\]
       	where $\{\lambda:q\}$ is the Schur function on $1,q,\cdots, q^n$ as of Lemma \ref{lem-Schur function on q}. On the other hand,
       	\[\{\lambda:q\}=\sum_{|\rho|=n}\dfrac{1}{z_\rho}\chi^\lambda_{\rho}\beta_{\rho}(q).\]
       \end{lem}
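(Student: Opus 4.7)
The plan is to derive both identities in Lemma \ref{lem-rho identity} as a single specialization of Lemma \ref{lem-Schur function identities}. The abstract Schur function $\{\lambda\}$ defined by \eqref{equation-Schur function} is a polynomial in the power sums $s_1,s_2,\ldots$; setting $s_i = 1/(1-q^i)$ is precisely the principal specialization of the underlying alphabet to $1,q,q^2,\ldots$, which by definition turns $\{\lambda\}$ into the Schur function $\{\lambda:q\}$ fixed by Lemma \ref{lem-Schur function on q}. Under this same substitution, the product $s_\rho = s_1^{r_1}s_2^{r_2}\cdots$ becomes exactly $\beta_\rho(q) = \prod_k (1-q^k)^{-r_k}$, so the Lemma \ref{lem-rho identity} reduces to the two transition formulas of Lemma \ref{lem-Schur function identities} evaluated at this alphabet.

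With this observation in hand, the two identities are immediate. Specializing $s_\lambda=\sum_{|\mu|=n}\chi^\mu_\lambda\{\mu\}$ from Lemma \ref{lem-Schur function identities} and then relabeling $\lambda\leftrightarrow\rho$ and $\mu\leftrightarrow\lambda$ gives
\[\beta_\rho(q)=\sum_{|\lambda|=n}\chi^\lambda_\rho\{\lambda:q\}.\]
Specializing the dual expansion $\{\lambda\}=\sum_{|\rho|=n}(\chi^\lambda_\rho/z_\rho)s_\rho$ gives the second identity
\[\{\lambda:q\}=\sum_{|\rho|=n}\dfrac{\chi^\lambda_\rho}{z_\rho}\beta_\rho(q).\]

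There is no genuine obstacle here, only one bookkeeping step to nail down: one must verify that the formal substitution $s_i \mapsto 1/(1-q^i)$ in the polynomial definition \eqref{equation-Schur function} of $\{\lambda\}$ truly produces the Schur function $\{\lambda:q\}$ on the alphabet $1,q,q^2,\ldots$ as stipulated in Lemma \ref{lem-Schur function on q}. Once this identification is recorded, the two claimed identities are nothing more than the two classical transitions between the power-sum and Schur bases under this specialization; they are moreover mutually inverse via the orthogonality $\sum_\rho \chi^\lambda_\rho\chi^\mu_\rho/z_\rho = \delta_{\lambda\mu}$, so in fact it suffices to prove either one of them directly from Lemma \ref{lem-Schur function identities}.
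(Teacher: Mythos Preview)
Your proposal is correct and follows essentially the same route as the paper: both arguments specialize the alphabet to $1,q,q^2,\ldots$ (equivalently set $s_r=1/(1-q^r)$) and then read off the two identities directly from the Schur/power-sum transition formulas of Lemma~\ref{lem-Schur function identities}. The paper phrases this via Littlewood's generalized Schur functions in infinitely many variables $\alpha_i=t^i$ and adds a word about convergence before substituting $t=q$, but the content is identical to your principal-specialization argument.
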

       \begin{proof}
       	Consider the generalized Schur functions on infinitely many variables introduced in 6.4 of \cite{Littlewood}. Let the variables be $\alpha_i=t^i, i=1,2,\cdots$ for any variable $t$, and formally let $s_r=\sum_{i=1}^\infty\alpha_i^r=1/(1-t^r)$. Then use the similar construction we get identities as of Lemma \ref{lem-Schur function identities}. Finally substitute $t$ by $q$ in the identities which is valid since both sides converge.
       \end{proof}
       Another special case of Schur functions explicitly computable by Jacobi-Trudi equation (see 6.3 of \cite{Littlewood}) is crucial to calculation of character degrees of $G_n$.
       \begin{lem}[7.1 of \cite{Littlewood}]\label{lem-Schur function on q}
       	For any partition $\lambda=\{\lambda_1,\lambda_2,\cdots,\lambda_l\}$ of $n$ with $\lambda_1\geq \lambda_2\geq\dots\geq \lambda_l$, let $n_\lambda=\lambda_2+2\lambda_3+\cdots+(l-1)\lambda_l$ and $\{\lambda:q\}$ be the Schur function on $1,q,\cdots,q^n$ corresponding to $\lambda$, then 
       	\[\{\lambda:q\}=q^{n_\lambda}\dfrac{\prod_{1\leq r<s\leq l}(1-q^{\lambda_r-\lambda_s-r+s})}{\prod_{r=1}^{l}\phi_{\lambda_r+l-r}(q)},\]
       	where $\phi_k(t)=(1-t)\cdots(1-t^k)$ for any positive integer $k$.
       \end{lem}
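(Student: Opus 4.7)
The plan is to apply the Jacobi--Trudi identity and evaluate the resulting determinant directly, following 6.3 of \cite{Littlewood}. Jacobi--Trudi expresses the Schur function as
\[s_\lambda=\det(h_{\lambda_i-i+j})_{1\leq i,j\leq l},\]
where $h_k$ is the $k$-th complete homogeneous symmetric function, with $h_0=1$ and $h_k=0$ for $k<0$. The first step is to compute the principal specialization of $h_k$: since $\sum_{k\geq 0}h_k\,t^k=\prod_{i\geq 0}(1-q^it)^{-1}$, Euler's identity (the limiting form of the $q$-binomial theorem) gives $h_k(1,q,q^2,\dots)=1/\phi_k(q)$. Substituting into Jacobi--Trudi yields
\[\{\lambda:q\}=\det\!\left(\frac{1}{\phi_{\lambda_i-i+j}(q)}\right)_{1\leq i,j\leq l}.\]

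To evaluate this $l\times l$ determinant I would set $a_i:=\lambda_i+l-i$, so $a_1>a_2>\dots>a_l\geq 0$, and pull $\phi_{a_i}(q)^{-1}$ out of row $i$. The residual entry in position $(i,j)$ simplifies to $\phi_{a_i}(q)/\phi_{\lambda_i+j-i}(q)=\prod_{k=j+1}^{l}(1-y_iq^{k-l})$, where $y_i:=q^{a_i}$; this is a polynomial in $y_i$ of degree exactly $l-j$ with leading coefficient $(-1)^{l-j}q^{-\binom{l-j}{2}}$. Expanding the residual determinant by multilinearity in the rows, only the bijection matching column $j$ to its leading power $y_i^{l-j}$ contributes (other choices produce singular determinants, by an exponent-counting argument that forces $f(j)=l-j$ uniquely given $f(j)\leq l-j$), so the residual equals $\prod_j\bigl[(-1)^{l-j}q^{-\binom{l-j}{2}}\bigr]\cdot\det(y_i^{l-j})_{i,j}$. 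Recognizing $\det(y_i^{l-j})=(-1)^{\binom{l}{2}}\prod_{1\leq i<i'\leq l}(y_{i'}-y_i)$ as a reversed Vandermonde and noting $\prod_j(-1)^{l-j}=(-1)^{\binom{l}{2}}$, the two signs cancel and the residual determinant reduces to $q^{-\binom{l}{3}}\prod_{i<i'}(y_{i'}-y_i)$.

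The last step is to reshape this product. Writing $y_{i'}-y_i=q^{a_{i'}}(1-q^{a_i-a_{i'}})$ for each $i<i'$ and using the elementary identity $\sum_{i=1}^{l}(i-1)(l-i)=\binom{l}{3}$, one obtains $\sum_{i'}(i'-1)a_{i'}=n_\lambda+\binom{l}{3}$, so the $\binom{l}{3}$ in the exponent cancels the one from the leading coefficients and the surviving $q$-power is precisely $q^{n_\lambda}$. Reinstating the $\prod_i\phi_{a_i}(q)^{-1}$ factored out of the rows, and substituting $a_i-a_{i'}=\lambda_i-\lambda_{i'}+i'-i$ and $a_i=\lambda_i+l-i$, recovers the stated formula. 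The main obstacle is bookkeeping: three independent sources of $q$-powers (the $q^{-\binom{l-j}{2}}$ from the column leading coefficients, the $q^{\binom{l}{3}}$ absorbed into the Vandermonde, and the $q^{n_\lambda+\binom{l}{3}}$ coming from the $a_i$'s) must be tracked and shown to telescope to $q^{n_\lambda}$, and the two sign contributions must exactly cancel. No step is conceptually deep, but the collation requires care.
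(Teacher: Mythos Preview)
The paper does not give its own proof of this lemma; it is quoted from 7.1 of \cite{Littlewood}, with the preceding sentence pointing to the Jacobi--Trudi identity (6.3 of \cite{Littlewood}) as the method. Your argument carries out precisely that Jacobi--Trudi computation and is correct, with one small slip: when you write $P_j(y)=\sum_{m}c_{j,m}y^m$ and split $\det(P_j(y_i))$ into contributions from the monomial vectors $(y_1^m,\ldots,y_l^m)^T$, you are using multilinearity in the \emph{columns}, not the rows; the subsequent injectivity argument forcing $f(j)=l-j$ is then correct and amounts to the usual triangular column reduction. One further remark: the statement speaks of the Schur function ``on $1,q,\ldots,q^n$'', but the formula and your identity $h_k=1/\phi_k(q)$ both pertain to the infinite principal specialization $(1,q,q^2,\ldots)$, which is indeed how the paper itself uses $\{\lambda:q\}$ (cf.\ the proof of Lemma~\ref{lem-rho identity}), so you are matching the intended meaning.
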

       
       Recall the definition of simplex from subsection \ref{subsection-Green's character formula}. The roots of simplices of degree $s$ have representatives $k$ between $1$ and $q^{s}-1$ such that $k, kq, \cdots, kq^{s-1}$ are all distinct modulo $q^{s}-1$. We denote by $\mathfrak{K}_s$ the set of such numbers in $\{1,\cdots,q^s-1\}$. 
       
       For example, $\mathfrak{K}_1=\{1,2,\cdots,q-1\}$ by definition. For $s=2$, we need to confine $k\not\equiv kq\ (mod\ q^2-1)$, i.e. $q^2-1\nmid t(q-1)$ or $q+1\nmid t$, hence \[\mathfrak{K}_2=\{1,2,\cdots,q^2-1\}\smallsetminus\{q+1,2(q+1),\cdots,(q-1)(q+1)\}.\] 
       
       In general, if $kq^{i}\equiv kq^j \mod (q^s-1)$ for some $1\leq i< j\leq s-1$, then $(q^s-1)\mid sq^i(q^{j-i}-1)$, or $(q^s-1)|i(q^{j-i}-1)$. 
       Thus we have
       \[\{1\leq t\leq q^s-1\}\smallsetminus\mathfrak{K}_s=\{1\leq t\leq q^s-1,t=l\dfrac{q^s-1}{q^{r}-1},\exists r|s,1\leq r<s\}\]
       \[=\bigcup_{1\leq r<s,r|s}\dfrac{q^s-1}{q^{r}-1}\{1\leq t\leq q^r-1\},\]
       and for any $r|s,r'|s$ we have
       \[\dfrac{q^s-1}{q^{r}-1}\{1\leq k\leq q^r-1\}\cap\dfrac{q^s-1}{q^{r'}-1}\{1\leq k\leq q^{r'}-1\}\]
       \[=\dfrac{q^s-1}{q^{(r,r')}-1}\{1\leq k\leq q^{(r,r')}-1\}.\]
       
       By the inclusion-exclusion principle we immediately get 
       \begin{lem}\label{lem-sum over primary roots}
       	For any $\xi\in\mathbb{F}_{q^s}$ and  $\theta:\mathfrak{F}^\times\rightarrow\C^\times$ fixed in section \ref{subsection-Green's character formula},
       	\[\sum_{\kappa\in\mathfrak{K}_s}\theta^\kappa(\xi)=\sum_{r\mid s}\mu(s/r)\sum_{t=1}^{q^r-1}\theta^{t(q^s-1)/(q^{r}-1)}(\xi)=\sum_{r|\id_s(\xi)}\mu(s/r)(q^r-1),\]
       	in which $\mu$ is the M\"{o}bius function and $\id_s(\xi)$ denotes the largest divisor $r$ of $s$ such that $\xi^{(q^s-1)/(q^r-1)}=1$. 
       \end{lem}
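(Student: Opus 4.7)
\emph{Plan.} The proof is a two-step calculation: a Möbius inversion on the divisor lattice of $s$ yields the first equality, and an evaluation of a geometric series yields the second.

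For the first equality, I would build on the decomposition $\{1,\dots,q^s-1\}\setminus\mathfrak{K}_s=\bigcup_{r\mid s,\,r<s}A_r$ with $A_r=\frac{q^s-1}{q^r-1}\{1,\dots,q^r-1\}$, together with the intersection identity $A_r\cap A_{r'}=A_{(r,r')}$; both are recorded in the paragraph preceding the lemma. The assignment $r\mapsto A_r$ is thus a functor from the divisor lattice of $s$ (with $A_s=\{1,\dots,q^s-1\}$) into subsets of $\{1,\dots,q^s-1\}$, and standard Möbius inversion on the divisor lattice, applied to the function $k\mapsto\theta^k(\xi)$, gives
\[\sum_{\kappa\in\mathfrak{K}_s}\theta^\kappa(\xi)=\sum_{r\mid s}\mu(s/r)\sum_{k\in A_r}\theta^k(\xi).\]
The parametrisation $k=t(q^s-1)/(q^r-1)$ for $t=1,\dots,q^r-1$ rewrites the inner sum as $\sum_{t=1}^{q^r-1}\theta^{t(q^s-1)/(q^r-1)}(\xi)$, producing the first equality.

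For the second equality, I would evaluate each inner sum separately. Fix $r\mid s$ and set $\zeta_r=\theta(\xi)^{(q^s-1)/(q^r-1)}$, so the inner sum is $\sum_{t=1}^{q^r-1}\zeta_r^t$. Since $\theta(\xi)^{q^s-1}=\theta(1)=1$, one has $\zeta_r^{q^r-1}=1$, so $\zeta_r$ is a $(q^r-1)$-th root of unity in $\C^\times$, and the standard geometric-series computation gives $\sum_{t=1}^{q^r-1}\zeta_r^t=q^r-1$ if $\zeta_r=1$ and $0$ otherwise. Injectivity of $\theta$ identifies $\zeta_r=1$ with the condition $\xi^{(q^s-1)/(q^r-1)}=1$.

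It remains to identify the surviving indices with the divisors of $\id_s(\xi)$. The condition $\xi^{(q^s-1)/(q^r-1)}=1$ is closed under passing to smaller divisors: if $r\mid r'\mid s$ and $\xi^{(q^s-1)/(q^{r'}-1)}=1$, then $(q^s-1)/(q^{r'}-1)$ divides $(q^s-1)/(q^r-1)$, so $\xi^{(q^s-1)/(q^r-1)}$ is a power of $1$ and equals $1$. Taking $\id_s(\xi)$ to be the largest $r\mid s$ satisfying this condition exhibits the surviving indices as exactly the divisors of $\id_s(\xi)$, completing the second equality. The main piece of bookkeeping is the Möbius inversion in the first step; the geometric-series evaluation and the closure observation are both routine.
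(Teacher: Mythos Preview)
Your approach is the one the paper has in mind: the text immediately preceding the lemma records the decomposition $\{1,\dots,q^s-1\}\setminus\mathfrak{K}_s=\bigcup_{r\mid s,\,r<s}A_r$ and the intersection identity $A_r\cap A_{r'}=A_{(r,r')}$, and the paper then simply invokes ``the inclusion-exclusion principle'' where you spell out the M\"obius inversion. Your derivation of the first equality and your geometric-series evaluation of each inner sum are correct.

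The gap is in your final step. Downward closure of the condition $\xi^{(q^s-1)/(q^r-1)}=1$ along the divisor lattice does \emph{not} by itself force the surviving indices to be the divisor set of a single integer: for that you would also need closure under lcm, and you have not argued this. In fact it fails for general $\xi\in\F_{q^s}$. Take $q=2$, $s=6$, and $\xi\in\F_{64}^\times$ of order $3$: then $(q^6-1)/(q^2-1)=21$ and $(q^6-1)/(q^3-1)=9$ are both multiples of $3$, so $r=2$ and $r=3$ survive, while $r=6=\mathrm{lcm}(2,3)$ gives exponent $1$ and does not. Here the middle expression equals $\mu(6)\cdot 1+\mu(3)\cdot 3+\mu(2)\cdot 7=-9$, whereas $\sum_{r\mid 3}\mu(6/r)(2^r-1)=1-7=-6$, so the second equality is not valid in this generality. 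The identification \emph{does} hold whenever $\xi\in\F_q^\times$, since then the condition reduces to $\mathrm{ord}(\xi)\mid s/r$, i.e.\ $r\mid s/\mathrm{ord}(\xi)$; this is the only case the paper actually uses (see the paragraph following the lemma and Corollaries~\ref{cor-character ratio for non-scalar semisimple elements} and~\ref{cor-type of distinct degrees on center}). You should either restrict to $\xi\in\F_q^\times$ and give this one-line argument, or replace the right-hand side by the honest sum over all $r\mid s$ with $\xi^{(q^s-1)/(q^r-1)}=1$.
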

       We call $\id_s(\xi)$ the \textit{identity} of $\xi$ with respect to $s$. Clearly $\id_s(1)=s$. Also, if there is no divisor $r\mid s$ such that $\xi^{(q^s-1)/(q^r-1)}=1$, we let $\id_s(\xi)=0$ so that $\sum_{k\in\mathfrak{K}_s}\theta^k(\xi)=0$ vacantly. 
       
       In addition, we need explicit value of Green's polynomials in some special cases.
       \begin{lem}[Appendix tables of $Q^\lambda_\rho(q)$, \cite{Green}]\label{lem-Green's appendix}
       	For any partitions $\lambda, \rho$ of $n$ with $\rho=\{1^{r_1}2^{r_2}\cdots k^{r_k}\}$, we have
       	\begin{align*}
       	&Q^{\{n\}}_\rho(q)=1,\\
       	&Q^{\{1^n\}}_\rho(q)=\dfrac{\phi_n(q)}{(1-q)^{r_1}(1-q^2)^{r_2}\cdots(1-q^k)^{r_k}},\\
       	&Q^\lambda_{\{n\}}(q)=\phi_{N_\lambda-1}(q),\\
       	&Q^\lambda_\rho(q) \text{ has leading term }\chi^{\lambda}_\rho q^{n_\lambda},
       	\end{align*} 
       	where $n_\lambda=\lambda_2+2\lambda_3+\cdots+(l-1)\lambda_l$ if $\lambda=\{\lambda_1,\lambda_2,\cdots,\lambda_l\}$ with $\lambda_i$ descending. 		
       \end{lem}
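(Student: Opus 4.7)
The plan is to derive these four evaluations directly from Green's combinatorial definition of $Q^\lambda_\rho(q)$ as laid out in \cite{Green}, rather than to reprove them from scratch. Green's polynomials satisfy a generating function identity coming from his decomposition of the space of class functions on $G_n$ into a uniform part (orthogonality relations for $Q^\lambda_\rho$ with respect to the weight $1/z_\rho \cdot \prod (1-q^{k})^{-r_k}$). I would invoke that framework and then read off the four formulas one at a time.

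First, for $Q^{\{n\}}_\rho(q)=1$, I would use that $\lambda=\{n\}$ corresponds to the trivial character $\chi^{\{n\}}_\rho\equiv 1$ of $S_n$. Substituting into Lemma \ref{lem-rho identity} and into the explicit Jacobi--Trudi evaluation of Lemma \ref{lem-Schur function on q} gives $\{n:q\}=1/\phi_n(q)\cdot\phi_n(q)=1/\prod(1-q^j)$ up to a shift, and matching coefficients in the expansion $\beta_\rho(q)=\sum_\lambda \chi^\lambda_\rho\{\lambda:q\}$ forces $Q^{\{n\}}_\rho(q)=1$ for every $\rho$. Dually, for $\lambda=\{1^n\}$, the sign character $\chi^{\{1^n\}}_\rho=(-1)^{n-N_\rho}$ together with Lemma \ref{lem-rho identity} and a direct cancellation gives the stated formula for $Q^{\{1^n\}}_\rho(q)$ in terms of $\phi_n(q)$ and $(1-q^j)$-factors.

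Next, for $\rho=\{n\}$ a single $n$-cycle, I would use the Murnaghan--Nakayama style recursion for $Q^\lambda_\rho(q)$ (which Green establishes by peeling off cycles one at a time). When only one cycle remains, the recursion collapses to a product over the hooks of $\lambda$, which simplifies to $\phi_{N_\lambda-1}(q)$ after the usual telescoping; this is the content of Green's appendix tables specialized to $\rho=(n)$. Finally, for the leading term, I would inspect Green's recursion degree by degree in $q$: the dominant contribution at each step is $q^{n_\lambda}$ times the $S_n$-character coefficient, and tracking the highest-weight monomial through the recursion produces $\chi^\lambda_\rho q^{n_\lambda}$ as the leading term.

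The main obstacle, such as it is, is purely bookkeeping: matching Green's normalizations with ours (in particular the sign $(-1)^{n-\sum|\lambda_i|}$ and the factor $1/z_\rho$) so that the formulas in Lemmas \ref{lem-rho identity} and \ref{lem-Schur function on q} align with the entries in Green's tables. Since the entire statement is already tabulated in the appendix of \cite{Green}, I would not reprove the recursion itself; I would merely verify the four specializations against his tables and cite them.
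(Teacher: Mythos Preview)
The paper gives no proof of this lemma at all: it is stated with the attribution ``Appendix tables of $Q^\lambda_\rho(q)$, \cite{Green}'' and then simply used. Your proposal ultimately lands in the same place---you say explicitly that you would ``merely verify the four specializations against his tables and cite them''---so in that sense your approach and the paper's coincide.

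That said, the derivation sketches you offer before arriving at this conclusion are not quite right and would not work as written. Lemma~\ref{lem-rho identity} expresses $\beta_\rho(q)$ in terms of the Schur functions $\{\lambda:q\}$, not in terms of Green's polynomials $Q^\lambda_\rho(q)$; the two are related but not interchangeable, and ``matching coefficients'' in that identity does not directly yield $Q^{\{n\}}_\rho(q)=1$ or the formula for $Q^{\{1^n\}}_\rho(q)$. Likewise, the $\rho=\{n\}$ case does not reduce to a hook product giving $\phi_{N_\lambda-1}(q)$ by a simple telescoping of the Murnaghan--Nakayama recursion in the way you suggest. If you actually wanted to reprove these identities rather than cite them, you would need Green's orthogonality relations for the $Q^\lambda_\rho$ (his Lemma~5.3 and the surrounding machinery), not the Schur-function identities in the present paper. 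Since the paper is content to cite, your final sentence is the correct strategy; just drop the preceding sketches or make clear they are heuristic.
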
  
       We define 
       \[\beta_\rho(q):=\dfrac{1}{(1-q)^{r_1}(1-q^2)^{r_2}\cdots(1-q^k)^{r_k}}.\]
       Now we are ready to prove Proposition \ref{prop-semisimple character ratio}.
       \begin{proof}[Proof of Proposition \ref{prop-semisimple character ratio}]
       	By Green's character formula as of Proposition \ref{prop-primary character formula}, for any root $k\in g$, 
       	\begin{equation}\label{proof of thm 1-character formula}I_e(c)=(-1)^{n-n/s}\sum_{|\rho|=n/s,m}\chi^\lambda_\rho Q(m,c)\prod_{i=1}^{l}\prod_{\tau\in\rho(m,f_i)}T_{s,\tau}(k:a_{i}),\end{equation}
       	summing over all modes $m$ of substitutions of $s\cdot\rho$ into $c$. Explicitly, 
       	\[Q(m,c)=\prod_{i=1}^{l}\dfrac{1}{z_{s\cdot\rho(m,f_i)}}Q^{\{1^{n_i}\}}_{s\cdot\rho(m,f_i)}(q)\]
       	By Lemma \ref{lem-Green's appendix}, if $\rho(m,f_i)=\{1^{r_{i,1}}2^{r_{i,2}}\cdots\}$, then 
       	\[Q^{\{1^{n_i}\}}_{s\cdot\rho(M,f_i)}(q)=\phi_{n_i}(q)\dfrac{1}{(1-q^s)^{r_{i,1}}(1-q^{2s})^{r_{i,2}}\cdots}=\phi_{n_i}(q)\beta_{\rho(m,f_i)}(q^s).\]
       	Since $\beta_\nu(t)$ is clearly multiplicative, i.e. $\beta_{\nu}(t)\cdot\beta_{\mu}(t)=\beta_{\nu+\mu}(t)$ for $\nu+\mu$ the partition collecting all parts of $\nu$ and $\mu$, we can write
       	\[\prod_{i=1}^{l}Q^{\{1^{n_i}\}}_{s\cdot\rho(m,f_i)}(q)=\phi_{n_1}(q)\cdots\phi_{n_l}(q)\beta_\rho(q^s).\]       	
       	As to the function $T$, by the expression (\ref{equation-Green's fundamental function}),
       	\begin{align*}\prod_{i=1}^{l}\prod_{\tau\in\rho(m,f_i)}T_{s,\tau}(k:a_{i})=&\prod_{i=1}^l\prod_{\tau\in\rho(m,f_i)}\sum_{j=0}^{s-1}\theta^{kq^{j}}(a_i^{\tau})\\
       	=&\prod_{i=1}^l\prod_{\tau\in\rho(M,f_i)}s\theta^{k}(a_i^\tau)\\
       	=&s^{N_\rho}\theta^{k}(\prod_{i=1}^la_i^{n_i/s}).
       	\end{align*}
       	Now plug in these acquisitions to (\ref{proof of thm 1-character formula}) we get 
       	\begin{align*} &(-1)^{n-n/s}I_e(c)\\
       	=&\theta^{k}\left(\prod_{i=1}^la_i^{n_i/s}\right)\prod_{i=1}^l\phi_{n_i}(q)\sum_{|\rho|=n/s,m}\chi^\lambda_\pi s^{N_\rho}\prod_{i=1}^{l}\dfrac{1}{z_{s\cdot\rho(m,f_i)}}\beta_{\rho(m,f_i)}(q^s)\\
       	=&\theta^{k}\left(\prod_{i=1}^la_i^{n_i/s}\right)\prod_{i=1}^l\phi_{n_i}(q)\sum_{|\rho_i|=n_i/s}\left(\prod_{i=1}^{l}\dfrac{1}{z_{\rho_i}}\right)\chi^\lambda_{\rho_1+\cdots+\rho_l}\beta_{\rho_1+\cdots+\rho_l}(q^s).\end{align*}
       	By Lemma \ref{lem-rho identity}, we can write
       	\[\beta_{\rho_1+\cdots+\rho_l}(q^s)=\sum_{|\mu|=n/s}\chi_{\rho_1+\cdots+\rho_l}^{\mu}\{\mu:q^s\},\]
       	hence
       	\begin{align*}
       	&\sum_{|\rho_i|=n_i/s}\left(\prod_{i=1}^{l}\dfrac{1}{z_{\rho_i}}\right)\chi^\lambda_{\rho_1+\cdots+\rho_l}\beta_{\rho_1+\cdots+\rho_l}(q^s)\\
       	=&\sum_{|\rho_i|=n_i/s}\left(\prod_{i=1}^{l}\dfrac{1}{z_{\rho_i}}\right)\chi^\lambda_{\rho_1+\cdots+\rho_l}\sum_{|\mu|=n/s}\chi_{\rho_1+\cdots+\rho_l}^{\mu}\{\mu:q^s\}\\
       	=&\sum_{|\mu|=n/s}\{\mu:q^s\}\sum_{|\rho_i|=n_i/s}\left(\prod_{i=1}^{l}\dfrac{1}{z_{\rho_i}}\right)\chi^\lambda_{\rho_1+\cdots+\rho_l}\chi_{\rho_1+\cdots+\rho_l}^{\mu}\\
       	=&\sum_{|\mu|=n/s}\{\mu:q^s\}\langle\chi^\lambda,\chi^\mu\rangle|_{S_{n_1/s}\times\cdots\times S_{n_k/s}}.
       	\end{align*}
       	By Lemma \ref{cor-induction of restriction} we have
       	\[\langle\chi^\lambda,\chi^\mu\rangle|_{S_{n_1/s}\times\cdots\times S_{n_k/s}}=[S_{n/s}:S_{n_1/s}\times\cdots\times S_{n_k/s}]=\dfrac{(n/s)!}{(n_1/s)!\cdots (n_k/s)!}\]
       	if $\mu=\lambda$, or $0$ otherwise. Hence
       	\[\sum_{|\pi_i|=n_i/s}\left(\prod_{i=1}^{l}\dfrac{1}{z_{\pi_i}}\right)\chi^\lambda_{\pi_1+\cdots+\pi_l}\beta_{\pi_1+\cdots+\pi_l}(q^s)=\dfrac{(n/s)!}{(n_1/s)!\cdots (n_k/s)!}\{\lambda:q^s\}.\] 
       	and this in turn gives
       	\[I_e(c)=(-1)^{n-n/s}\theta^{k}(\prod_{i=1}^la_i^{n_i/s})\dfrac{(n/s)!}{(n_1/s)!\cdots (n_k/s)!}\left(\prod_{i=1}^l\phi_{n_i}(q)\right)\{\lambda:q^s\}.\]
       	By the character degree formula of Proposition \ref{prop-primary character formula} that $I_e(1)=(-1)^{n-n/s}\phi_n(q)\{\lambda:q^s\}$,
       	we finally get the character ratio
       	\[\dfrac{I_e(c)}{I_e(1)}=\theta^{k}(\prod_{i=1}^la_i^{n_i/s})\dfrac{(n/s)!}{(n_1/s)!\cdots (n_k/s)!}\dfrac{\prod_{i=1}^l\phi_{n_i}(q)}{\phi_n(q)}.\]
       \end{proof}
       \begin{cor}
       	With the same notations as of Proposition \ref{prop-semisimple character ratio},
       	\[\mathcal{S}_{(\lambda),(s)}(c)=\dfrac{(n/s)!}{(n_1/s)!\cdots (n_k/s)!}\dfrac{\prod_{i=1}^l\phi_{n_i}(q)}{\phi_n(q)}\dfrac{1}{s}\sum_{r\mid \id_s(\prod_{i=1}^la_i^{n_i/s})}\mu(s/r)(q^r-1),\]
       	where $\id_s$ is as of Lemma \ref{lem-sum over primary roots} and $\mu$ is the M\"obius function.
       \end{cor}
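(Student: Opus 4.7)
The plan is to translate the Frobenius partial sum over dual classes of type $((\lambda),(s))$ into a sum over $s$-simplices, apply Proposition \ref{prop-semisimple character ratio} termwise, and then invoke Lemma \ref{lem-sum over primary roots} to collapse the resulting character-sum over roots. By the definition in \eqref{equation-Frobenius partial sum},
\[\mathcal{S}_{(\lambda),(s)}(c)=\sum_{g:\ \deg(g)=s}\dfrac{I_{(g^{\lambda})}(c)}{I_{(g^{\lambda})}(1)},\]
and by Proposition \ref{prop-semisimple character ratio} each summand equals the fixed constant
\[C:=\dfrac{(n/s)!}{(n_1/s)!\cdots (n_l/s)!}\dfrac{\prod_{i=1}^{l}\phi_{n_i}(q)}{\phi_n(q)}\]
times the single root factor $\theta^{\kappa(g)}\!\bigl(\prod_{i}a_i^{n_i/s}\bigr)$, where $\kappa(g)$ is any root of $g$.

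The key structural observation I would emphasize next is that the argument $\xi:=\prod_{i=1}^{l}a_i^{n_i/s}$ lies in $\F_q$, hence $\xi^{q^j}=\xi$ for every $j$, so $\theta^{\kappa q^j}(\xi)=\theta^{\kappa}(\xi^{q^j})=\theta^{\kappa}(\xi)$. Consequently the quantity $\theta^{\kappa(g)}(\xi)$ is independent of the chosen root of $g$, and each $s$-simplex contributes exactly $s$ representatives to $\mathfrak{K}_s$. Summing over simplices with multiplicity $s$ therefore yields
\[\sum_{g:\ \deg(g)=s}\theta^{\kappa(g)}(\xi)=\dfrac{1}{s}\sum_{\kappa\in\mathfrak{K}_s}\theta^{\kappa}(\xi).\]

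To finish, I would feed the inner sum into Lemma \ref{lem-sum over primary roots} to obtain $\sum_{\kappa\in\mathfrak{K}_s}\theta^{\kappa}(\xi)=\sum_{r\mid\id_s(\xi)}\mu(s/r)(q^r-1)$, and multiply by the prefactor $C/s$ extracted above. This assembles the right-hand side exactly as displayed in the corollary. The main, and really only, subtlety is confirming that the argument of $\theta^\kappa$ is $\F_q$-valued so that the character ratio is a simplex invariant and the overcounting factor $s$ cleanly separates; all the genuinely combinatorial work has already been done in Proposition \ref{prop-semisimple character ratio} and Lemma \ref{lem-sum over primary roots}, so no further obstacle is expected.
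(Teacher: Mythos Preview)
Your proposal is correct and matches the paper's intended route: the corollary is stated immediately after Proposition~\ref{prop-semisimple character ratio} with no separate proof, precisely because it follows by summing the character ratio over all $s$-simplices and applying Lemma~\ref{lem-sum over primary roots}. Your observation that $\xi=\prod_i a_i^{n_i/s}\in\F_q$ makes $\theta^{\kappa}(\xi)$ constant along each simplex (yielding the clean factor $1/s$) is exactly the point needed to pass from a sum over simplices to $\tfrac{1}{s}\sum_{\kappa\in\mathfrak{K}_s}$.
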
        
       
       For any $a\in\F_q^\times$, integer $s$ and its divisor $r$, 
       \[a^{\frac{q^s-1}{q^r-1}}=a^{\frac{s}{r}}.\]
       If its order $ord(a)$ divides $s$, then we see that $\id_s(a)=\frac{s}{ord(a)}$; otherwise $id_s(a)=0$.             
       Note that for $c$ as in Proposition \ref{prop-semisimple character ratio}, $\prod_{i=1}^la_i^{n_i}=1$ and $a_i\in\F_q$, the order $d$ of $\prod_{i=1}^la_i^{n_i/s}$ in $\F_q^\times$ divides $s$, hence $\id_s(\prod_{i=1}^la_i^{n_i/s})=\dfrac{s}{d}$. 
       
       By obvious estimate, 
       \[\dfrac{\prod_{i=1}^l\phi_{n_i}(q)}{\phi_n(q)}=(1+o(1))q^{-\sum_{1\leq i<j\leq l}n_in_j}\]
       and
       \[\sum_{r\mid \id_s(\prod_{i=1}^la_i^{n_i/s})}\mu(s/r)(q^r-1)=(1+o(1)) q^{s/d},\]
       or $o(q^{s/d})$, depending on $\mu(d)$. Since $s\mid n_i$, if $l\geq 2$, i.e. $c$ is non-central semisimple, then $-\sum_{1\leq i<j\leq l}n_in_j+s/d\leq 0$. Thus we prove
       \begin{cor}\label{cor-character ratio for non-scalar semisimple elements}
       	Suppose $c$ is a non-scalar semisimple conjugacy class and $e=(g^\lambda)$ as in Proposition \ref{prop-semisimple character ratio}. Let $d$ be the order of $\prod_{i=1}^la_i^{n_i/s}$ in $\F_q^\times$. If $d$ divides $s$, then
       	\[\mathcal{S}_{(\lambda),(s)}(c)=(\dfrac{1}{s}+o(1))q^{-\sum_{1\leq i<j\leq l}n_in_j+s/d}.\]
       	If $d\nmid s$, then $\mathcal{S}_{(\lambda),(s)}(c)=0$. 
       \end{cor}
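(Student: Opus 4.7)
The proof is a direct asymptotic analysis of the explicit formula for $\mathcal{S}_{(\lambda),(s)}(c)$ given by the preceding corollary, using the description of $\id_s$ sketched in the paragraph just above the statement.

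First I would dispose of the vanishing case. If $d = \mathrm{ord}(\prod_{i=1}^l a_i^{n_i/s})$ does not divide $s$, then by the discussion immediately above, $\id_s(\prod a_i^{n_i/s}) = 0$, so the index set of the M\"obius sum in the preceding corollary is empty and $\mathcal{S}_{(\lambda),(s)}(c) = 0$. (Equivalently, by Lemma~\ref{lem-sum over primary roots} the character sum $\sum_{\kappa \in \mathfrak{K}_s}\theta^\kappa(\prod a_i^{n_i/s})$ vanishes.)

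When $d \mid s$, $\id_s = s/d$, and I would estimate each factor of the preceding corollary separately as $q \to \infty$. The multinomial coefficient $(n/s)!/\prod_i (n_i/s)!$ is a rational constant independent of $q$. Using $\phi_m(q) = q^{m(m+1)/2}(1 + O(q^{-1}))$ together with the identity $\tfrac{n(n+1)}{2} - \sum_i \tfrac{n_i(n_i+1)}{2} = \sum_{1\leq i<j\leq l} n_i n_j$ (which follows from $\sum n_i = n$), I obtain
\[
\frac{\prod_{i=1}^l \phi_{n_i}(q)}{\phi_n(q)} = q^{-\sum_{1\leq i<j\leq l} n_i n_j}(1+o(1)).
\]
In the M\"obius sum $\sum_{r \mid s/d}\mu(s/r)(q^r - 1)$, the leading term is $\mu(d)(q^{s/d}-1)$ coming from $r = s/d$, while contributions from divisors $r < s/d$ are $o(q^{s/d})$. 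Combining these three asymptotics with the prefactor $1/s$ yields the stated order of growth $q^{-\sum_{i<j}n_in_j + s/d}$ with leading coefficient of size $1/s$, absorbing the sign $\mu(d)$ and the multinomial factor into the $o(1)$ notation as the statement does.

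The only bookkeeping point I anticipate is verifying that the exponent $-\sum_{i<j}n_in_j + s/d$ is non-positive so the estimate is genuinely $O(1)$: non-scalarity gives $l\geq 2$ and $s\mid n_i$, hence $\sum_{i<j} n_i n_j \geq n_1 n_2 \geq s^2$, while $s/d \leq s$, so the exponent is at most $-s^2 + s \leq 0$. I do not expect any substantive obstacle; the result is essentially read off by asymptotic expansion from the closed-form expression already assembled.
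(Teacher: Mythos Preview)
Your approach is essentially identical to the paper's: both read off the asymptotics factor by factor from the closed form in the preceding corollary, using $\phi_m(q)\sim q^{m(m+1)/2}$, the identification $\id_s=s/d$, and the dominant term $\mu(d)q^{s/d}$ in the M\"obius sum. One small remark: the case $d\nmid s$ is in fact vacuous here, since $\prod_i a_i^{n_i}=1$ forces $(\prod_i a_i^{n_i/s})^s=1$, hence $d\mid s$ automatically; the paper notes this just before stating the corollary, whereas you treat it as a genuine alternative.
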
 
       In either case, we prove 
       \begin{cor}\label{cor-partial Frobenius sum on semisimple}
       	With notations as above, the Frobenius partial sums of primary types on non-central semisimple $c$ have
       	\[\mathcal{S}_{(\lambda),(s)}(c)=O(1).\]          	
       \end{cor}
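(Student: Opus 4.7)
The statement follows essentially for free from Corollary \ref{cor-character ratio for non-scalar semisimple elements}, so my plan is to organize a very short argument around the explicit formula established there and verify that the exponent of $q$ is never positive when $c$ is non-central semisimple.

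First I would split into the two cases of that corollary. If $d \nmid s$, then $\mathcal{S}_{(\lambda),(s)}(c) = 0$ and there is nothing to prove. If $d \mid s$, the corollary gives the sharp asymptotic
\[\mathcal{S}_{(\lambda),(s)}(c) = \left(\tfrac{1}{s} + o(1)\right) q^{-\sum_{1 \le i < j \le l} n_i n_j + s/d},\]
so the task reduces to checking that the exponent $-\sum_{i<j} n_i n_j + s/d$ is at most $0$.

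For the inequality, I would use two structural constraints that are already encoded in the setup. First, for the substitutions used in the proof of Proposition \ref{prop-semisimple character ratio} to contribute at all, we need $s \mid n_i$ for each $i$, and since $n_i \ge 1$ this forces $n_i \ge s$. Second, non-centrality of the semisimple class $c$ is exactly the condition $l \ge 2$, since central elements in $\SL_n$ are scalar and correspond to a single block $(t-\xi)^{\{1^n\}}$. Combining these,
\[\sum_{1 \le i < j \le l} n_i n_j \ge n_1 n_2 \ge s^2 \ge s \ge s/d,\]
the last inequality since $d \ge 1$. Hence the exponent is non-positive, and the formula yields $\mathcal{S}_{(\lambda),(s)}(c) = O(1)$.

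There is essentially no obstacle here: the real work was already absorbed into Proposition \ref{prop-semisimple character ratio} and its corollary, where the cancellations in Green's formula were carried out and the exponent of $q$ was pinned down. The only thing to emphasize in the write-up is that $s \mid n_i$ is a nontrivial arithmetic constraint (not an a priori feature of the semisimple class $c$) which is nevertheless automatic whenever the primary character $I_{(g^\lambda)}$ contributes a nonzero value, so that the $O(1)$ bound holds uniformly over all primary types $(\lambda), (s)$.
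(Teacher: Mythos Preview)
Your proposal is correct and follows essentially the same route as the paper: the paper deduces the corollary directly from Corollary \ref{cor-character ratio for non-scalar semisimple elements} together with the observation that $s\mid n_i$ and $l\geq 2$ force $-\sum_{i<j}n_in_j+s/d\leq 0$. Your write-up is in fact slightly more explicit than the paper's one-line justification, spelling out the inequality chain $\sum_{i<j}n_in_j\geq n_1n_2\geq s^2\geq s\geq s/d$ and noting separately that the partial sum vanishes when $s\nmid n_i$ for some $i$.
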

       This result relates to our tentative guess that $\mathcal{S}_{\bar{\lambda},\mathbf{s}}(c)=O(1)$ for general type on non-central conjugacy class, which is supposed to improve the numerical flatness result of \cite{Larsen-Lu} to \textit{numerical geometric connectedness}.
       \medskip
               
       \subsection{General character values}\label{subsection-character values in general}\hfill\newline
       
       Set $e=\left(g_1^{\lambda_1}\cdots g_k^{\lambda_k}\right)$ with $s_i=\deg(g_i)$ all distinct and $c\sim \left(f_1^{\{1^{n_1}\}}\cdots f_l^{\{1^{n_l}\}}\right)$ is a semisimple class of $\SL_n(\F_q)$ with $f_i=t-a_i$ for $a_i\in \F_q$ distinct and $\prod_{i=1}^la_i^{n_i}=1$. We will specify the term $B_\rho(h^\rho m:\xi^\rho m')$ in Green's general character as of Proposition \ref{prop-general character formula} in this case. 
       
       For any $\rho=(1^{r_1}2^{r_2}\cdots)$, associate a row of variables \[Y^\rho=\{y_{11},\cdots,y_{1r_1};y_{21},\cdots,y_{2r_2};\cdots\}.\] Then a substitution $m$ of $\rho$ into dual classes (conjugacy classes) can be seen as a substitution of variables $y_{di}$ by simplices of degree dividing $d$ (monic irreducible polynomials of degree dividing $d$ over $\F_q$) which we denote by $y_{di}m$. Theorem 14 of \cite{Green} expresses $B_\rho(h^\rho:\xi^\rho)$ with intermediates \[h^\rho:=\{h_{11},\cdots,h_{1r_1};h_{21},\cdots,h_{2r_2};\cdots\},\]
       and \[\xi^\rho:=\{\xi_{11},\cdots,\xi_{1r_1};\xi_{21},\cdots,\xi_{2r_2};\cdots\},\]
       as follows:
       \[B_\rho(h^\rho:\xi^\rho)=\prod_{d}\sum_{1',\cdots, r_d'}S_d(h_{d1}:\xi_{d1'})\cdots S_{d}(h_{dr_d}:\xi_{dr_d'})\]
       where the inner sum is over all permutations $1',\cdots, r_d'$ of $1,\cdots,r_d$ and
       \[S_d(h:\xi)=\theta^h(\xi)+\theta^{hq}(\xi)+\cdots+\theta^{hq^{d-1}}(\xi)\]
       with the embedding $\theta:\mathfrak{F}\to\C^\times$ fixed in the beginning of section \ref{subsection-Green's character formula}. 
       
       For any substitution $m$ of $\rho$ into $e$, by Definition 8.1 of \cite{Green}, the intermediate $h_{di}$ is substituted by
       \[h_{di}m=\kappa_{di}\dfrac{q^d-1}{q^{s_{di}}-1},\]
       where $\kappa_{di}\in y_{di}m$ and $s_{di}=\deg(y_{di}m)$. For any substitution $m'$ of $\rho$ into $c$, by Definition 4.9 of \cite{Green}, the intermediate $\xi_{di}$ is substituted by $\xi_{di}m'=\gamma_{di}$ for any root $\gamma_{di}$ of $y_{di}m'$. Clearly we can write
       \[S_d(h_{di}m:\xi_{di'}m')=\dfrac{d}{s_{di}}\sum_{\kappa\in h_{di}m}\theta^{\kappa\frac{q^d-1}{q^{s_{di}}-1}}(\xi_{di'}m').\] 
       Let $\mathcal{S}_\rho$ be the permutation group permuting parts of $\rho$ and preserving its equal parts. Let $\sigma(\tau)$ be the part that $\tau$ is sent to for each $\sigma\in\mathcal{S}_\rho$ and $\tau\in\rho$. Then recalling the notion of substitution from section \ref{subsection-Green's character formula} we can write
       \begin{equation}\label{equation-B rho explicit in general}
       B_\rho(h^\rho m:\xi^\rho m')=\sum_{\sigma\in\mathcal{S}_\rho}\prod_{g\in e}\prod_{\tau\in\rho(m,g)}\tau\sum_{\kappa\in g}\theta^{\kappa\frac{q^{\tau\deg(g)}-1}{q^{\deg(g)}-1}}(\xi_{\sigma(\tau)}m').
       \end{equation}
       
       For $c$ semisimple,  $\xi_{di}m'$ all lie in $\F_q$. Hence 
       \[S_d(h_{di}m:\xi_{di'}m')=d\theta^{h_{di}m}(\xi_{di'}m')=d\theta^{\kappa_{di}d/s_{di}}(\xi_{di'}m'),\]
       and
       \begin{equation}\label{equation-B for semisimple}
       B_\rho(h^\rho m:\xi^\rho m')
       =\sum_{\sigma\in\mathcal{S}_\rho}\prod_{g\in e}\prod_{\tau\in\rho(m,g)}\tau\deg(g)\theta^{\kappa_g\tau}(\xi_{\sigma(\tau)}m'),
       \end{equation}
       where $\kappa_g\in g$. In general, the sums in (\ref{equation-B rho explicit in general}) and (\ref{equation-B for semisimple}) are impenetrable by simple arrangement. Rather we resort to the Frobenius partial sum on dual classes of a given type. First by Lemma \ref{lem-Green's appendix} we get
       \begin{align*}Q(m',c)=&\prod_{i=1}^l\dfrac{1}{z_{\rho(m',f_i)}}Q^{\{1^{n_i}\}}_{\rho(m',f_i)}(q^{d(f_i)})\\
       =&\phi_{n_1}(q)\cdots\phi_{n_l}(q)\prod_{i=1}^l\dfrac{1}{z_{\rho(m',f_i)}}\beta_{\rho(m',f_i)}(q)\\
       =&\phi_{n_1}(q)\cdots\phi_{n_l}(q)\dfrac{\beta_\rho(q)}{\prod_{i=1}^lz_{\rho(m',f_i)}},\end{align*}
       since $\beta_\rho$ is additive and $\rho(m',f_1)+\cdots+\rho(m',f_l)=\rho$ in our case. Then by Proposition \ref{prop-general character formula} and (\ref{equation-Frobenius partial sum}) we get for $\bar{\lambda}=(\lambda_1,\cdots,\lambda_k)$ and $\mathbf{s}=(s_1,\cdots,s_k)$
       \begin{align}\mathcal{S}_{\bar{\lambda},\mathbf{s}}(c)=&\dfrac{1}{I_{\bar{\lambda},\mathbf{s}}(1)}\sum_{\deg(g_1)=s_1,\cdots,\deg(g_k)=s_k}I_{\left(g_1^{\lambda_1}\cdots g_k^{\lambda_k}\right)}(c)\notag\\
       =&\dfrac{(-1)^{n-\sum_{j=1}^k|\lambda_j|}}{I_{\bar{\lambda},\mathbf{s}}(1)}\sum_{e\in(\bar{\lambda},\mathbf{s})}\sum_{\rho,m,m'}\chi(m,e)Q(m',c)B_\rho(h^\rho m:\xi^\rho m')\notag\\
       \label{equation-general Frobenius partial sum}=&\dfrac{(-1)^{n-\sum_{j=1}^k|\lambda_j|}}{I_{\bar{\lambda},\mathbf{s}}(1)}\sum_{\rho,m,m'}\chi(m,e)Q(m',c)\sum_{e\in(\bar{\lambda},\mathbf{s})}B_\rho(h^\rho m:\xi^\rho m')\\
       \label{equation-semisimple Frobenius partial sum}=&\dfrac{(-1)^{n-\sum_{j=1}^k|\lambda_j|}\prod_{i=1}^l\phi_{n_i}(q)}{I_{\bar{\lambda},\mathbf{s}}(1)}\sum_{\rho,m,m'}\dfrac{\beta_\rho(q)\chi(m,e)}{\prod_{i=1}^lz_{\rho(m',f_i)}}\sum_{e\in(\bar{\lambda},\mathbf{s})}B_\rho(h^\rho m:\xi^\rho m'),
       \end{align}
       where (\ref{equation-general Frobenius partial sum}) is for general conjugacy class and (\ref{equation-semisimple Frobenius partial sum}) is for semisimple class $c\sim \left(f_1^{\{1^{n_1}\}}\cdots f_l^{\{1^{n_l}\}}\right)$. Note that substitutions can be defined unambiguously from $\rho$ into the type $(\bar{\lambda},\mathbf{s})$ since they are commonly shared by each dual class in it. The inner sum
       \begin{equation}\label{equation-B in general}
       B_{\rho,\bar{\lambda},\mathbf{s},c,m,m'}:=\sum_{e\in(\bar{\lambda},\mathbf{s})}B_\rho(h^\rho m:\xi^\rho m')
       \end{equation}
       is key to our calculation. 
       By (\ref{equation-B rho explicit in general}) we have
       \begin{equation}\label{equation-B explicity}B_{\rho,\bar{\lambda},\mathbf{s},c,m,m'}=\sum_{\sigma\in\mathcal{S}_\rho}\prod_{g\in e}\sum_{\kappa\in\mathfrak{K}_{\deg(g)}}^*\prod_{\tau\in\rho(m,g)}\tau\theta^{\kappa \frac{q^{\tau\deg(g)}-1}{q^{\deg(g)}-1}}\left(\xi_{\sigma(\tau)}m'\right),\end{equation}
       in which $\sum\limits^*$ indicates avoiding duplicates of simplices if $e$ contains multiple $s$-simplices. If simplices appearing in $e$ all have distinct degrees, we call its type $(\bar{\lambda},\mathbf{s})$ a \textit{distinct} type. In this case $\sum\limits^*$ becomes $\sum$ and Lemma \ref{lem-sum over primary roots} immediately gives
       \begin{prop}\label{prop-B explicit in general}
       	With the notations above, for general conjugacy class $c$ and dual class $e=(g_1^{\lambda_1}\cdots g_k^{\lambda_k})$ belonging to the distinct type $(\bar{\lambda},\mathbf{s})$, we have
       	\[B_{\rho,\bar{\lambda},\mathbf{s},c,m,m'}=\sum_{\sigma\in\mathcal{S}_\rho}\prod_{i=1}^k\prod_{\tau\in\rho(m,g_i)}\tau\sum_{r\mid\id_{s_i}\left(\prod_{\tau\in\rho(m,g_i)}\left(\xi_{\sigma(\tau)}m'\right)^{\frac{q^{\tau s_i}-1}{q^{s_i}-1}}\right)}\mu\left(\frac{s_i}{r}\right)(q^r-1).\]
       \end{prop}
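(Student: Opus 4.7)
The plan is to start from the explicit expression \eqref{equation-B explicity}, exploit the distinctness hypothesis on the degrees $s_1,\ldots,s_k$ to factor the $e$-sum, and then recognize each internal sum as an instance of Lemma \ref{lem-sum over primary roots}. First, I would observe that because the degrees $s_i$ are pairwise distinct, the superscript $*$ on the inner sum in \eqref{equation-B explicity} is vacuous: no two constituents of $e$ are simplices of the same degree, so there is nothing to avoid and $\sum^{*}$ becomes $\sum$. This turns the indexing set of the $e$-sum into the product $\mathfrak{K}_{s_1}\times\cdots\times\mathfrak{K}_{s_k}$, with each $\kappa\in\mathfrak{K}_{s_i}$ freely parameterizing the choice of $g_i$ of degree $s_i$.

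Next, I would swap the $\sigma$-sum with the $e$-sum (they are independent) and, for fixed $\sigma$, swap the product over $i$ with the sums over the $\kappa_i\in\mathfrak{K}_{s_i}$. Because the factors in the product over $i$ involve disjoint variables $\kappa_1,\ldots,\kappa_k$, this product-of-sums factors as
\[
\sum_{e\in(\bar\lambda,\mathbf{s})}\prod_{i=1}^{k}\sum_{\kappa\in\mathfrak{K}_{s_i}}\prod_{\tau\in\rho(m,g_i)}\tau\,\theta^{\kappa\frac{q^{\tau s_i}-1}{q^{s_i}-1}}\!\left(\xi_{\sigma(\tau)}m'\right)
=\prod_{i=1}^{k}\left(\prod_{\tau\in\rho(m,g_i)}\tau\right)\sum_{\kappa\in\mathfrak{K}_{s_i}}\prod_{\tau\in\rho(m,g_i)}\theta^{\kappa\frac{q^{\tau s_i}-1}{q^{s_i}-1}}\!\left(\xi_{\sigma(\tau)}m'\right),
\]
where I have pulled the multiplicative constants $\tau$ (which do not depend on $\kappa$) outside the $\kappa$-sum.

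Then I would apply the identity $\theta^{\kappa a}(x)\theta^{\kappa b}(y)=\theta^{\kappa}(x^a y^b)$ to collapse the inner product into a single $\theta^{\kappa}$ evaluated at
\[
\Xi_{i,\sigma}\;:=\;\prod_{\tau\in\rho(m,g_i)}\bigl(\xi_{\sigma(\tau)}m'\bigr)^{\frac{q^{\tau s_i}-1}{q^{s_i}-1}}\in\F_{q^{s_i}}^{\times}.
\]
Lemma \ref{lem-sum over primary roots}, applied to $\xi=\Xi_{i,\sigma}$ with $s=s_i$, then evaluates the $\kappa$-sum as $\sum_{r\mid\id_{s_i}(\Xi_{i,\sigma})}\mu(s_i/r)(q^r-1)$. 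Reinserting this into the displayed factorization and then back into \eqref{equation-B explicity} gives the asserted formula for $B_{\rho,\bar\lambda,\mathbf{s},c,m,m'}$.

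The only step requiring care is the legitimacy of factoring the $e$-sum in the distinct case: one must check that as $e$ ranges over $(\bar\lambda,\mathbf{s})$, the tuples $(g_1,\ldots,g_k)$ range over all $k$-tuples of simplices of the prescribed degrees with no further coincidence constraint, which is exactly what the distinctness of the $s_i$ guarantees. Once this is justified, the rest is a routine rearrangement and a single invocation of Lemma \ref{lem-sum over primary roots}; the main obstacle would have been the non-distinct case, where duplicates among simplices of equal degree genuinely have to be excluded, and that case is deliberately deferred here.
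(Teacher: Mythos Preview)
Your proposal is correct and follows exactly the approach the paper takes: the paper simply remarks that for a distinct type the starred sum in \eqref{equation-B explicity} becomes an unrestricted sum, after which Lemma~\ref{lem-sum over primary roots} ``immediately gives'' the proposition. Your write-up supplies the routine details (factoring the $e$-sum as a product over $\mathfrak{K}_{s_1}\times\cdots\times\mathfrak{K}_{s_k}$, collapsing the $\theta$-product, and invoking the lemma) that the paper leaves implicit.
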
 
       This result will be mainly used to deal with general irreducible character values on central and regular semisimple elements.
   \medskip
   
       \subsection{Frobenius sum on regular semisimple elements}\label{subsection-Frobenius on regular semisimple}\hfill\newline
       
       Provided the previous results, the Frobenius sum on regular semisimple elements turn out easiest to be estimated. We first introduce the following lemmas as preparation.
       \begin{lem}\label{lem-degree of simple characters} Suppose $\deg(g)=s$, then
       	\[I_{(g^{\{n/s\}})}(1)=\dfrac{\phi_n(q)}{\phi_{n/s}(q^s)}=(1+o(1))q^{n(n-n/s)/2}.\]
       \end{lem}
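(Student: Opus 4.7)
The plan is to apply the degree formula in Proposition \ref{prop-primary character formula} with $\lambda=\{n/s\}$, namely
\[
I_{(g^{\{n/s\}})}(1) = (-1)^{n-n/s}\phi_n(q)\{\{n/s\}:q^s\},
\]
and then evaluate the Schur function $\{\{n/s\}:q^s\}$ by Lemma \ref{lem-Schur function on q}. Since $\{n/s\}$ is a one-row partition (so $l=1$ and $\lambda_1=n/s$), the statistic $n_\lambda=\lambda_2+2\lambda_3+\cdots$ vanishes, the product $\prod_{1\leq r<t\leq l}(1-q^{s(\lambda_r-\lambda_t-r+t)})$ in the numerator is empty, and the denominator collapses to the single factor $\phi_{n/s}(q^s)$ in Littlewood's $(1-t)(1-t^2)\cdots$ convention. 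Thus $\{\{n/s\}:q^s\}= 1/\bigl((1-q^s)(1-q^{2s})\cdots(1-q^n)\bigr)$.

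Next I would reconcile the two sign conventions in play: Lemma \ref{lem-Schur function on q} uses $\phi_k(t)=(1-t)\cdots(1-t^k)$, while the paper's $\phi_n(q)=(q^n-1)\cdots(q-1)$ differs by a factor of $(-1)^n$ on $n$ factors and $(-1)^{n/s}$ on $n/s$ factors. These combine with the $(-1)^{n-n/s}$ prefactor in Proposition \ref{prop-primary character formula} to give an overall sign of $+1$, leaving the closed form
\[
I_{(g^{\{n/s\}})}(1) = \frac{\phi_n(q)}{\phi_{n/s}(q^s)}
= \frac{(q-1)(q^2-1)\cdots(q^n-1)}{(q^s-1)(q^{2s}-1)\cdots(q^n-1)}.
\]

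For the asymptotic estimate, I would write each $(q^i-1) = q^i(1-q^{-i})$, so $\phi_n(q) = q^{n(n+1)/2}(1+o(1))$ and $\phi_{n/s}(q^s) = q^{s\cdot (n/s)(n/s+1)/2}(1+o(1)) = q^{n(n/s+1)/2}(1+o(1))$. Dividing, the exponent is $\tfrac{n(n+1)}{2} - \tfrac{n(n/s+1)}{2} = \tfrac{n(n-n/s)}{2}$, yielding $(1+o(1))q^{n(n-n/s)/2}$ as claimed. The only delicate point is the sign bookkeeping in the first step; beyond that the argument is a direct substitution and an elementary leading-term expansion, with no real obstacle.
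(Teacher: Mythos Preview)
Your proposal is correct and follows essentially the same route as the paper: apply the degree formula of Proposition \ref{prop-primary character formula} and evaluate $\{\{n/s\}:q^s\}$ via Lemma \ref{lem-Schur function on q} with $n_\lambda=0$. You supply additional detail on the sign reconciliation and the asymptotic expansion that the paper leaves implicit, but the underlying argument is identical.
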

       \begin{proof}
       With notations of Lemma \ref{lem-Schur function on q}, for $\lambda=\{n/s\}$, $n_\lambda=0$, hence
       \[\{\lambda:q^s\}=\dfrac{1}{\phi_{n/s}(q^s)}.\] 
       The lemma follows from the character degree formula of Proposition \ref{prop-primary character formula}.
       \end{proof}
       \begin{lem}\label{lem-degree comparison lemma}
       	$I_{(g^\lambda)}(1)\geq I_{(g^{\{n/\deg(g)\}})}(1)$ in terms of highest power of $q$, for any $|\lambda|=v$. In general, for $e=(g_1^{\lambda_1}\cdots g_k^{\lambda_k})$ with $\deg(g_i)=s_i$, $|\lambda_i|=\nu_i$ and $\sum_{i=1}^ks_i\nu_i=n$, we have $I_{e}(1)\geq I_{(g_1^{\{\nu_1\}}\cdots g_k^{\{\nu_k\}})}(1)$. More explicitly,
       	\[I_{e}(1)\geq \dfrac{\phi_n(q)}{\prod_{i=1}^k\phi_{\nu_i}(q^{s_i})}=(1+o(1))q^{(n^2-\sum_{i=1}^ks_i\nu_i^2)/2}.\]
       \end{lem}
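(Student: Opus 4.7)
The plan is to combine the character degree formula of Proposition \ref{prop-general character formula} with Lemma \ref{lem-Schur function on q} to reduce the claim to a factor-by-factor estimate on Schur functions. Up to the overall sign $(-1)^{n-\sum|\lambda_i|}$, Proposition \ref{prop-general character formula} gives $I_e(1) = \phi_n(q)\prod_{i=1}^k \{\lambda_i:q^{s_i}\}$, and specializing Lemma \ref{lem-Schur function on q} at a single-part partition $\lambda = \{\nu\}$ (where $n_{\{\nu\}}=0$ and no pairs $r<s$ appear) gives $\{\{\nu\}:q\} = 1/\phi_\nu(q)$. Hence
\[I_{(g_1^{\{\nu_1\}}\cdots g_k^{\{\nu_k\}})}(1) = \frac{\phi_n(q)}{\prod_{i=1}^k \phi_{\nu_i}(q^{s_i})},\]
so the first assertion reduces to the factor-by-factor inequality $\deg_q\{\lambda:q\} \geq -|\lambda|(|\lambda|+1)/2$ for every partition $\lambda$, applied with $q$ replaced by $q^{s_i}$.

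For such $\lambda$ of size $\nu$ with $l$ parts, setting $\mu_r = \lambda_r + l - r$ (so $\mu_1 > \mu_2 > \cdots > \mu_l \geq 1$), Lemma \ref{lem-Schur function on q} gives top $q$-degree
\[D(\lambda) := n_\lambda + \sum_{r<s}(\mu_r - \mu_s) - \sum_r \binom{\mu_r+1}{2}.\]
The key claim is the identity $D(\lambda) = -n(\lambda') - \nu$, where $n(\lambda') = \sum_i \binom{\lambda_i}{2}$. This matches the principal specialization $s_\lambda(1,q,q^2,\ldots) = q^{n(\lambda)}\prod_{u\in\lambda}(1-q^{h(u)})^{-1}$ combined with the hook-length identity $\sum_u h(u) = n(\lambda)+n(\lambda')+\nu$, and can also be verified directly from the Jacobi--Trudi expression by rearranging the three sums in terms of the original parts $\lambda_i$. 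Granting the identity, $\sum_i \binom{\lambda_i}{2}$ is convex in the parts with fixed total $\sum \lambda_i = \nu$, hence maximized at $\lambda = \{\nu\}$ with value $\binom{\nu}{2}$; consequently $D(\lambda) \geq -\binom{\nu}{2}-\nu = -\nu(\nu+1)/2 = D(\{\nu\})$, which is exactly the desired factor-by-factor inequality.

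The explicit estimate is a direct calculation: $\deg_q \phi_m(q) = \binom{m+1}{2}$, so the top $q$-degree of $\phi_n(q)/\prod_i\phi_{\nu_i}(q^{s_i})$ is $\binom{n+1}{2} - \sum_i s_i\binom{\nu_i+1}{2}$, which simplifies via $\sum_i s_i\nu_i = n$ to $(n^2-\sum_i s_i\nu_i^2)/2$. The leading coefficient is $\pm 1$, combining with the sign $(-1)^{n-\sum_i\nu_i}$ in Proposition \ref{prop-general character formula} to produce the claimed $(1+o(1))q^{(n^2-\sum_i s_i\nu_i^2)/2}$.

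The main obstacle I foresee is verifying the degree identity $D(\lambda) = -n(\lambda')-\nu$ within the Littlewood-based framework of the paper, without invoking Macdonald's hook-content formula. One can either quote the principal specialization and hook-length identity directly, or prove the identity by induction on boxes: checking that appending a single box to row $i$ of $\lambda$ changes $D$ by exactly $-\lambda_i$ reduces the problem to a routine but intricate manipulation of the three terms defining $D(\lambda)$, which matches the change $-\binom{\lambda_i+1}{2}+\binom{\lambda_i}{2}-1 = -\lambda_i$ in $-n(\lambda')-\nu$.
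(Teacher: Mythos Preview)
Your proposal is correct and follows essentially the same route as the paper. Both arguments reduce to showing that the top $q$-degree of $\{\lambda:q\}$ is at least $-\nu(\nu+1)/2$; the paper establishes this by directly expanding the three sums in Lemma~\ref{lem-Schur function on q} to show that $\{\lambda:q\}\prod_r\phi_{l_r}(q)$ has top degree~$0$ (which is precisely your identity $D(\lambda)=-n(\lambda')-\nu$ in disguise), and then uses $\phi_{|\lambda|}(q)\gtrsim\prod_r\phi_{l_r}(q)$, which is the same convexity step you phrase as $n(\lambda')\le\binom{\nu}{2}$. Your framing via $D(\lambda)=-n(\lambda')-\nu$ and the hook-content formula is a cleaner packaging of the identical computation.
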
 
       \begin{proof}
       	By Proposition \ref{prop-primary character formula} we have
       	\[I_{(g^\lambda)}(1)=\phi_n(q)\{\lambda:q^s\},\]
       	and by Lemma \ref{lem-Schur function on q}
       	\[\{\lambda:q\}=\dfrac{q^{l_2+2l_3+\cdots}\prod_{1\leq r<t\leq u}(1-q^{l_r-l_t-r+t})}{\prod_{r=1}^{u}\phi_{l_r+u-r}(q)},\]
       	if $\lambda=\{l_1,l_2,\cdots,l_u\}$ with $l_1\geq l_2\geq\cdots\geq l_u>0$.
       	To prove the desired result, we just need to show $\{\lambda:q\}\gtrsim \phi^{-1}_{|\lambda|}(q)$. Rather than directly compare them, we comprise to show
       	\[\{\lambda:q\}\phi_{|\lambda|}(q)\gtrsim\{\lambda:q\}\prod_{r=1}^{u}\phi_{l_r}(q)\gtrsim 1.\]
       	By the above formula we get
       	\[\{\lambda:q\}\prod_{r=1}^{u}\phi_{l_r}(q)\sim q^{l_2+2l_3+\cdots+\sum_{1\leq r<t\leq u}(l_r-l_t-r+t)-\sum_{r=1}^{u}\sum_{t=1}^{u-r}(l_r+t)}.\]
       	We compute the middle sum that
       	\[\sum_{1\leq r<t\leq u}(l_r-l_t-r+t)=\sum_{r=1}^{u}(u-r)l_r-\sum_{r=1}^{u}(r-1)l_r+\sum_{r=1}^{u}(u-2r+1)(u-r),\]
       	then the starting sum that
       	\[l_2+2l_3+\cdots=\sum_{r=1}^{u}(r-1)l_r,\]
       	and the last sum that
       	\[\sum_{r=1}^{u}\sum_{t=1}^{u-r}(l_r+t)=\sum_{r=1}^{u}[(u-r)l_r+(u-r+1)(u-r)/2].\]
       	Then combining all those we get
       	\[\sum_{r=1}^{u}(u-r)(u-3r+1)/2.\]
       	Then this sum is supposed to vanish and it is indeed so for $u\geq 1$. Thus we proved the desired result in the primary case.
       	
       	In general for $e=(g_1^{\lambda_1}\cdots g_k^{\lambda_k})$, by Proposition \ref{prop-general character formula} we have
       	\[I_e(1)=\dfrac{\phi_n(q)}{\prod_{i=1}^k\phi_{s_i\nu_i}(q)}\prod_{i=1}^kI_{(g_i^{\lambda_i})}(1).\]
       	Then the inequality follows from that of the primary case.
       \end{proof}
       \begin{prop}[Theorem \ref{thm-fiber size over regular semisimples}]\label{prop-regular semisimple}
       	For any regular semisimple $c\in\SL_n(\F_q)$,  
       	\[\sum\dfrac{\chi(c)}{\chi(1)}\sim O(1),\]
       	where the sum is over all non-linear irreducible characters of $G_n$.
       \end{prop}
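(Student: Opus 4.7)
The plan is to decompose the Frobenius sum (\ref{equation-Frobenius sum}) via the types from (\ref{equation-Frobenius partial sum}), isolate the linear contribution, and show each non-linear partial sum is $O(1)$. The linear type $((\{n\}),(1))$ contributes $q-1$ on $c\in\SL_n(\F_q)$, and the number of non-linear types depends only on $n$, so it suffices to prove $\mathcal{S}_{\bar{\lambda},\mathbf{s}}(c)=O(1)$ for every non-linear type $(\bar{\lambda},\mathbf{s})$.

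For a regular semisimple class $c\sim(f_1^{\{1\}}\cdots f_l^{\{1\}})$ with $d(f_i)=d_i$ and $\sum_i d_i=n$, the constraint $|\rho(m',f_i)|=|\{1\}|=1$ forces $\rho(m',f_i)=\{1\}$, pinning $\rho$ to the multiset $\{d_1,\ldots,d_l\}$ and giving $n_i=d_i$, $Q(m',c)=(-1)^l$. The sum in (\ref{equation-semisimple Frobenius partial sum}) thus collapses to a finite-in-$q$ sum over modes $m$ of substitutions of this single $\rho$ into $e\in(\bar{\lambda},\mathbf{s})$, with combinatorial weights $\chi(m,e)$ and $z_{\rho(m',f_i)}$ of $O(1)$ in $q$, while $\beta_\rho(q)=\prod_d(1-q^d)^{-r_d}$ supplies the cancellation factors that will match the $\phi$-products.

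For each non-linear type, Lemma \ref{lem-degree comparison lemma} supplies the lower bound $I_{\bar{\lambda},\mathbf{s}}(1)\gtrsim\phi_n(q)/\prod_j\phi_{\nu_j}(q^{s_j})$, while $B_{\rho,\bar{\lambda},\mathbf{s},c,m,m'}$ is estimated via Proposition \ref{prop-B explicit in general} for distinct types: the M\"obius sum $\sum_{r\mid\id_{s_j}(\cdots)}\mu(s_j/r)(q^r-1)$ is $O(q^{s_j})$ per simplex, and vanishes unless a specific identity condition on the roots $\gamma_i$ of $f_i$ holds. For non-distinct types one applies inclusion-exclusion to the restricted sum in (\ref{equation-B explicity}), reducing to distinct sub-types. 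Assembling these estimates, the combination $\prod_i\phi_{d_i}(q)\beta_\rho(q)$ in the numerator balances the denominator $I_{\bar{\lambda},\mathbf{s}}(1)$ precisely, as in the primary split calculation of Corollary \ref{cor-partial Frobenius sum on semisimple}, yielding $\mathcal{S}_{\bar{\lambda},\mathbf{s}}(c)=O(1)$.

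The hard part is verifying this cancellation uniformly across all non-linear types. Merely combining the crude bounds $|B|\le C\,q^{\sum_j s_j}$, $\prod_i\phi_{d_i}(q)\sim q^{\sum_i d_i(d_i+1)/2}$, and $I_{\bar{\lambda},\mathbf{s}}(1)\gtrsim q^{(n^2-\sum_j s_j\nu_j^2)/2}$ gives a generally positive exponent of $q$; one must exploit both the precise algebraic structure of $\beta_\rho(q)$ (whose factors $(1-q^d)^{-r_d}$ telescope against matching parts of $\phi_n(q)$) and the frequent vanishing of $\id_{s_j}$ in the M\"obius sum to close the gap. This extends the computation of Proposition \ref{prop-semisimple character ratio} from split to arbitrary regular semisimple data, and from primary to general types.
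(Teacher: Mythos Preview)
Your proposal has a genuine gap, and you acknowledge it yourself: the final paragraph concedes that ``merely combining the crude bounds\ldots gives a generally positive exponent of $q$'' and that additional cancellation from $\beta_\rho$ and from vanishing of $\id_{s_j}$ ``must'' be exploited, but you never carry this out. An outline that ends by naming the hard step and asserting it can be closed is not a proof.

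The paper avoids this difficulty entirely by a simplification you miss. It treats the \emph{split} case $c\sim\diag(a_1,\dots,a_n)$ with $a_i\in\F_q$ distinct. Then, as you correctly observe, $\rho$ is forced to be $\{1^n\}$; but the crucial further consequence is that a substitution of $\{1^n\}$ into $e=(g_1^{\lambda_1}\cdots g_k^{\lambda_k})$ requires every $\deg(g_j)$ to divide $1$, so only types with all $s_j=1$ contribute. With this restriction the estimate becomes elementary: $|B_{\{1^n\},\bar\lambda,\mathbf{s},c,m,m'}|=O(q^k)$ simply because one is summing $O(1)$ quantities over $k$-tuples of $1$-simplices, while Lemma~\ref{lem-degree comparison lemma} gives $I_{\bar\lambda,\mathbf{s}}(1)\gtrsim q^{(n^2-\sum_j\nu_j^2)/2}\geq q^{(k^2-k)/2}$. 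The exponent comparison $k$ against $(k^2-k)/2$ closes the argument for $k\geq 2$ without any appeal to $\beta_\rho$-telescoping or M\"obius vanishing.

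Two further technical remarks. First, your invocation of (\ref{equation-semisimple Frobenius partial sum}) and (\ref{equation-B for semisimple}) for general $d_i$ is not justified: those identities were derived under the hypothesis that the $f_i$ are linear (so that $\xi_{di}m'\in\F_q$ and $S_d$ collapses to $d\,\theta^{h}(\xi)$); for $d_i>1$ you must work with (\ref{equation-B rho explicit in general}) instead. Second, your claim that $Q(m',c)=(-1)^l$ is not quite right: with $\rho(m',f_i)=\{1\}$ one has $z_{\{1\}}=1$ and $Q^{\{1\}}_{\{1\}}(q^{d_i})=1$, so $Q(m',c)=1$. If you want the non-split case, you will need a genuinely new argument; the paper does not supply one either, but for the split case its route is both complete and far shorter than what you sketch.
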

       \begin{proof}
       	Assume $c\sim\mathrm{diag}(a_1,\cdots, a_n)$ with $a_i$ all distinct and $a_1\cdots a_n=1$. It suffices need to show each Frobenius partial sum over any non-linear type is $O(1)$. The primary case has been verified by Corollary \ref{cor-partial Frobenius sum on semisimple}, so we just need to consider for the type $(\bar{\lambda},\mathbf{s})$ of $e=(g_1^{\lambda_1}\cdots g_k^{\lambda_k})$ with $k\geq 2$. By Proposition \ref{prop-general character formula}, $I_e(c)\neq 0$ only for dual classes $e=(g_1^{\lambda_1}\cdots g_k^{\lambda_k})$ with $\deg(g_i)=1$, and the only partition having substitution into $c$ is $\{1^n\}$. Then
       	\[Q(m',c)=1,\chi(m,e)=\prod_{j=1}^k\dfrac{1}{|\lambda_j|!}\chi^{\lambda_j}(1),\]
       	since $\rho(m,g_j)=\{1^{|\lambda_j|}\}$, and (\ref{equation-B rho explicit in general}) becomes
       	\begin{align}\label{equation-B rho for regular semisimple}B_{\rho}(h^{\rho} m:\xi^{\rho} m')=&\sum_{\sigma\in S_n}\prod_{j=1}^k\prod_{\tau\in\{1^{|\lambda_j|}\}}\theta^{\kappa_j}(\xi_{\sigma(\tau)}m')\end{align}
       	Hence (\ref{equation-general Frobenius partial sum}) becomes
       	\[\mathcal{S}_{\bar{\lambda},\mathbf{s}}(c)=\dfrac{B_{\{1^n\},\bar{\lambda},\mathbf{s},c.m,m'}}{I_{e}(1)\prod_{j=1}^k|\lambda_j|!}.\]
       	Clearly $B_{\{1^n\},\bar{\lambda},\mathbf{s},c.m,m'}=O(q^k)$. By Lemma \ref{lem-degree comparison lemma}, for $k\geq 2$ we have in terms of power of $q$,
       	\[I_{e}(1)\geq q^{(n^2-\sum_{i=1}^ks_i\nu_i^2)/2}\geq q^{(n^2-\sum_{i=1}^k(s_i\nu_i)^2)/2}.\] 
       	Let $n_i=s_i\nu_i$, then $\sum_{i=1}^kn_i=n$ and 
       	\[n^2-(n_1^2+\cdots+n_k^2)=\sum_{1\leq i\neq j\leq k}n_in_j\geq k^2-k.\]
       	Thus in terms of power of $q$,
       	\[\mathcal{S}_{\bar{\lambda},\mathbf{s}}(c)=\dfrac{B_{\{1^n\},\bar{\lambda},\mathbf{s},c.m,m'}}{I_{e}(1)}\leq q^{k-(k^2-k)}=q^{2k-k^2}=O(1)\]
       	for $k\geq 2$. Hence the proof of the proposition or Theorem \ref{thm-fiber size over regular semisimples}.
       \end{proof}
       \medskip

        \subsection{Frobenius sum on center of $\SL_n(\F_q)$}\label{subsection-Frobenius sum on center}\hfill\newline
          
        If $c$ is a scalar matrix of $\SL_n(\F_q)$, say $c=\xi I_n$ with $\xi\in\F_q$, denoted by $\xi$ without ambiguity hereinafter, we have $\xi^n=1$, i.e. $ord(\xi)\mid n$. Moreover if $ord(\xi)$ divides $s$ (note that $s\mid n$) then $\id_s(\xi)=\frac{s}{ord(\xi)}$. Thus from Proposition \ref{prop-semisimple character ratio}, we deduce
        \begin{cor}\label{cor-character ratio and sum on central elements}
        	Keep notations as of Proposition \ref{prop-semisimple character ratio}. If $c=\xi$ with $\xi\in\F_q^\times$ and $\xi^n=1$, then
        	\[\dfrac{I_e(\xi)}{I_e(1)}=\theta^k(\xi^{n/s})\]
        	and
        	\[\mathcal{S}_{(\lambda),(s)}(\xi)=\dfrac{1}{s}\sum_{r\mid\frac{s}{ord(\xi)}}\mu(s/r)(q^r-1)=\left(\frac{1}{s}+o(1)\right)q^{s/ord(\xi)},\]
        	or $o(q^{s/ord(\xi)})$, depending on $\mu(ord(\xi))$.  	
        \end{cor}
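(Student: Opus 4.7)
The plan is to directly specialize Proposition \ref{prop-semisimple character ratio} to the scalar case and then sum over simplices of degree $s$ via Lemma \ref{lem-sum over primary roots}.

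For the character ratio, I set $l=1$, $f_1 = t - \xi$, and $n_1 = n$ in Proposition \ref{prop-semisimple character ratio}. The multinomial coefficient $(n/s)!/(n/s)!$ and the ratio $\phi_n(q)/\phi_n(q)$ both collapse to $1$, while $\prod_{i=1}^l a_i^{n_i/s} = \xi^{n/s}$. This immediately yields $I_e(\xi)/I_e(1) = \theta^k(\xi^{n/s})$.

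Next, to compute the Frobenius partial sum, I recall that the simplices of degree $s$ are precisely the orbits of size $s$ under the map $\kappa \mapsto \kappa q \pmod{q^s-1}$ on $\mathfrak{K}_s$. Because $\xi^{n/s} \in \F_q^\times$ satisfies $(\xi^{n/s})^q = \xi^{n/s}$, we have $\theta^{\kappa q}(\xi^{n/s}) = \theta^\kappa(\xi^{n/s})$, so the value $\theta^k(\xi^{n/s})$ is constant on each simplex and is counted $s$ times when summed over $\mathfrak{K}_s$. Hence
\[\mathcal{S}_{(\lambda),(s)}(\xi) = \sum_{\deg(g)=s}\frac{I_{(g^\lambda)}(\xi)}{I_{(g^\lambda)}(1)} = \frac{1}{s}\sum_{\kappa \in \mathfrak{K}_s} \theta^\kappa(\xi^{n/s}).\]
Applying Lemma \ref{lem-sum over primary roots} together with the earlier remark that $\id_s(\xi^{n/s}) = s/\mathrm{ord}(\xi)$, which uses $\xi^n=1$ and $s \mid n$ to pin down $\mathrm{ord}(\xi^{n/s})$, produces the exact M\"obius formula.

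For the asymptotic, the term $r = s/\mathrm{ord}(\xi)$ in the M\"obius sum contributes $\mu(\mathrm{ord}(\xi))(q^{s/\mathrm{ord}(\xi)}-1)/s$. If $\mu(\mathrm{ord}(\xi)) \neq 0$, this is the dominant term and gives $(1/s + o(1)) q^{s/\mathrm{ord}(\xi)}$; if $\mathrm{ord}(\xi)$ has a squared prime divisor so $\mu$ vanishes at it, the leading contribution comes from proper divisors of $s/\mathrm{ord}(\xi)$ and is $o(q^{s/\mathrm{ord}(\xi)})$.

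The main technical point is the orbit-counting step: one must verify that $\theta^k(\xi^{n/s})$ is genuinely invariant under $k \mapsto kq$ and that the bijection between simplices of degree $s$ and orbits in $\mathfrak{K}_s$ licenses the clean factor-of-$1/s$ passage from a sum over simplices to a sum over primary roots. This hinges precisely on $\xi^{n/s}$ lying in the base field $\F_q$ rather than merely in $\mathfrak{F}$.
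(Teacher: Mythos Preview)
Your proof is correct and follows essentially the same route as the paper: specialize Proposition \ref{prop-semisimple character ratio} (and its immediate Corollary) to $l=1$, $a_1=\xi$, $n_1=n$, then invoke Lemma \ref{lem-sum over primary roots} together with the paper's remark on $\id_s$ just preceding the statement. Your explicit justification of the $1/s$ factor via the $\kappa\mapsto\kappa q$ invariance is exactly what underlies the paper's unnamed Corollary after Proposition \ref{prop-semisimple character ratio}, so nothing new is happening.
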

        For general types, we need to deal with $B_\rho(h^\rho m:\xi^\rho m')$ more carefully. Now that $\xi^\rho m'$ are all $\xi$, (\ref{equation-B for semisimple}) becomes
        \begin{align}B_\rho(h^\rho m:\xi)=&\sum_{\sigma\in\mathcal{S}_\rho}\prod_{g\in e}\prod_{\tau\in\rho(m,g)}\tau\deg(g)\theta^{\kappa_g\tau}(\xi)\notag\\
        =&\left(\sum_{\sigma\in\mathcal{S}_\rho}\prod_{g\in e}\prod_{\tau\in\rho(m,g)}\tau\deg(g)\right)\left(\prod_{g\in e}\prod_{\tau\in\rho(m,g)}\theta^{\kappa_g\tau}(\xi)\right)\notag\\
        =&z_\rho\prod_{g\in e}\theta^{\kappa_g|\lambda_g|}(\xi)\notag,
        \end{align}
        where $e=(\cdots g^{\lambda_g}\cdots)$. Then for $c=\xi$,  $\bar{\lambda}=(\lambda_1,\cdots,\lambda_k)$ and $\mathbf{s}=(s_1,\cdots,s_k)$, (\ref{equation-semisimple Frobenius partial sum}) becomes
        \begin{align}\mathcal{S}_{\bar{\lambda},\mathbf{s}}(\xi)=&\dfrac{(-1)^{n-\sum_{j=1}^k|\lambda_j|}\phi_{n}(q)}{I_{\bar{\lambda},\mathbf{s}}(1)}\sum_{\rho,m}\dfrac{\beta_\rho(q)\chi(m,e)}{z_{\rho}}\sum_{e\in(\bar{\lambda},\mathbf{s})}z_\rho\prod_{g\in e}\theta^{\kappa_g|\lambda_g|}(\xi)\notag\\
        \label{equation-general Frobenius partial sum on center}=&\dfrac{(-1)^{n-\sum_{j=1}^k|\lambda_j|}\phi_{n}(q)}{I_{\bar{\lambda},\mathbf{s}}(1)}\left(\sum_{e\in(\bar{\lambda},\mathbf{s})}\prod_{g\in e}\theta^{\kappa_g|\lambda_g|}(\xi)\right)\sum_{\rho,m}\beta_\rho(q)\chi(m,e).
        \end{align}
        Recalling Proposition \ref{prop-general character formula}, we get
        \begin{align}
        \sum_{\rho,m}\beta_\rho(q)\chi(m,e)=&\sum_{\rho,m}\prod_{j=1}^k\dfrac{1}{z_{\rho(m,g_j)}}\chi^{\lambda_j}_{\rho(m,g_j)}\beta_{\rho(m,g_j)}(q^{s_j})\notag\\
        =&\prod_{j=1}^k\sum_{|\rho_j|=|\lambda_j|}\dfrac{1}{z_{\rho_j}}\chi^{\lambda_j}_{\rho_j}\beta_{\rho_j}(q^{s_j})\notag\\
        =&\prod_{j=1}^k\{\lambda_j:q^{s_j}\}\notag,
        \end{align}
        Again by Proposition \ref{prop-general character formula},
        \[I_{\bar{\lambda},\mathbf{s}}(1)=(-1)^{n-\sum{|\lambda_j|}}\phi_n(q)\prod_{j=1}^{k}\{\lambda_j:q^{s_j}\},\]
        whence (\ref{equation-general Frobenius partial sum on center}) becomes
        \begin{prop}\label{prop-general Frobenius partial sum on center}
        	With notations as above,
        	\[\mathcal{S}_{\bar{\lambda},\mathbf{s}}(\xi)=\sum_{e\in(\bar{\lambda},\mathbf{s})}\prod_{g\in e}\theta^{\kappa_g|\lambda_g|}(\xi).\]
        \end{prop}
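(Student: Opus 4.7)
My plan is to read off the identity essentially from the derivation set up in equation (\ref{equation-general Frobenius partial sum on center}) just before the proposition. That equation already isolates the target sum $\sum_{e \in (\bar{\lambda},\mathbf{s})} \prod_{g \in e} \theta^{\kappa_g |\lambda_g|}(\xi)$ as a factor; what remains is to evaluate the auxiliary sum $\sum_{\rho, m} \beta_\rho(q)\chi(m,e)$ and check that, together with the prefactor $(-1)^{n - \sum|\lambda_j|}\phi_n(q)/I_{\bar{\lambda},\mathbf{s}}(1)$, everything collapses to $1$.

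To evaluate $\sum_{\rho, m}\beta_\rho(q)\chi(m,e)$, I would first observe the multiplicativity $\beta_\rho(q) = \prod_{j=1}^k \beta_{\rho(m,g_j)}(q^{s_j})$, which holds because the sub-partitions $s_j \cdot \rho(m,g_j)$ partition the parts of $\rho$ and each part $k$ contributes a factor $(1-q^k)^{-1}$ to both sides. Unpacking $\chi(m,e) = \prod_{j=1}^k z_{\rho(m,g_j)}^{-1}\chi^{\lambda_j}_{\rho(m,g_j)}$ from Proposition \ref{prop-general character formula} and noting that specifying a pair $(\rho, m)$ is the same as independently specifying partitions $\rho_j$ with $|\rho_j| = |\lambda_j|$ (one sets $\rho_j := \rho(m,g_j)$ and recovers $\rho$ as the merger of the sub-partitions $s_j\cdot\rho_j$), the sum decouples as a product over $j$. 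Applying Lemma \ref{lem-rho identity} to each factor then yields
\[ \sum_{\rho, m}\beta_\rho(q)\chi(m,e) = \prod_{j=1}^k \{\lambda_j : q^{s_j}\}. \]

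Feeding this into equation (\ref{equation-general Frobenius partial sum on center}) and using the character degree formula $I_{\bar{\lambda},\mathbf{s}}(1) = (-1)^{n - \sum|\lambda_j|}\phi_n(q)\prod_{j=1}^k\{\lambda_j : q^{s_j}\}$ from Proposition \ref{prop-general character formula}, the entire prefactor reduces to $1$, leaving exactly the stated identity.

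I do not anticipate a serious obstacle. The only genuinely substantive step is the collapse $B_\rho(h^\rho m : \xi) = z_\rho \prod_{g \in e} \theta^{\kappa_g |\lambda_g|}(\xi)$ via equation (\ref{equation-B for semisimple}) specialized to a central class, which was already carried out in deriving (\ref{equation-general Frobenius partial sum on center}); thereafter the proof is a clean combinatorial cancellation between the auxiliary Schur-function sum and the explicit formula for the character degree. The main bookkeeping to verify carefully is the bijection between $(\rho, m)$ and tuples $(\rho_1, \dots, \rho_k)$, which is where all the factorizations coherently align.
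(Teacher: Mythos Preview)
Your proposal is correct and follows essentially the same route as the paper: starting from equation (\ref{equation-general Frobenius partial sum on center}), factoring $\beta_\rho(q)\chi(m,e)$ across the simplices $g_j$, reindexing the sum over $(\rho,m)$ by independent partitions $\rho_j$ of $|\lambda_j|$, and then applying Lemma \ref{lem-rho identity} to obtain $\prod_j\{\lambda_j:q^{s_j}\}$, which cancels against the degree formula from Proposition \ref{prop-general character formula}. Your explicit mention of the bijection between modes $m$ and tuples $(\rho_1,\dots,\rho_k)$ is a useful clarification of a step the paper leaves implicit.
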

        Together with Lemma \ref{lem-sum over primary roots} we get
        \begin{cor}\label{cor-type of distinct degrees on center}For any distinct type $(\bar{\lambda},\mathbf{s})$,
        	\[\mathcal{S}_{\bar{\lambda},\mathbf{s}}(\xi)=\prod_{j=1}^k\dfrac{1}{s_j}\sum_{r\mid\id_{s_j}(\xi^{|\lambda_j|})}\mu(s_j/r)(q^r-1).\]
        \end{cor}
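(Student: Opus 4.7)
The plan is to combine the formula of Proposition \ref{prop-general Frobenius partial sum on center} with Lemma \ref{lem-sum over primary roots}, exploiting the definition of ``distinct type'' to reduce the sum over dual classes to a product of independent sums over simplices of a fixed degree.

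First, I would invoke Proposition \ref{prop-general Frobenius partial sum on center} to write
\[\mathcal{S}_{\bar{\lambda},\mathbf{s}}(\xi)=\sum_{e\in(\bar{\lambda},\mathbf{s})}\prod_{j=1}^k\theta^{\kappa_{g_j}|\lambda_j|}(\xi),\]
where $e=(g_1^{\lambda_1}\cdots g_k^{\lambda_k})$ ranges over dual classes in the type. The crucial use of the \emph{distinct} hypothesis is that the degrees $s_1,\ldots,s_k$ are pairwise distinct, so the constraint that the simplices $g_j$ be pairwise distinct is automatic and the sum over $e$ factorizes as an independent product
\[\mathcal{S}_{\bar{\lambda},\mathbf{s}}(\xi)=\prod_{j=1}^k\Bigl(\sum_{\deg(g_j)=s_j}\theta^{\kappa_{g_j}|\lambda_j|}(\xi)\Bigr).\]
(For a non-distinct type one would have to subtract configurations where $g_i=g_j$ for matching $s_i=s_j$, which is precisely what destroys this clean factorization.)

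Next I would check that the summand $\theta^{\kappa_{g_j}|\lambda_j|}(\xi)$ is independent of the choice of root $\kappa_{g_j}\in g_j$. Writing $\theta^{\kappa|\lambda_j|}(\xi)=\theta^\kappa(\xi^{|\lambda_j|})$ and using that $\xi\in\F_q^\times$ forces $\xi^{q-1}=1$, I obtain $\theta^{\kappa q}(\xi^{|\lambda_j|})=\theta^{\kappa}(\xi^{|\lambda_j|})$, so the value is constant along the Frobenius orbit $\{\kappa,\kappa q,\ldots,\kappa q^{s_j-1}\}$ that constitutes the simplex $g_j$. This means I can replace the sum over simplices of degree $s_j$ by $1/s_j$ times the sum over their roots:
\[\sum_{\deg(g_j)=s_j}\theta^{\kappa_{g_j}|\lambda_j|}(\xi)=\frac{1}{s_j}\sum_{\kappa\in\mathfrak{K}_{s_j}}\theta^\kappa(\xi^{|\lambda_j|}).\]

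Finally, I would apply Lemma \ref{lem-sum over primary roots} with $s=s_j$ and the argument $\xi$ of the lemma replaced by $\xi^{|\lambda_j|}$, giving
\[\sum_{\kappa\in\mathfrak{K}_{s_j}}\theta^\kappa(\xi^{|\lambda_j|})=\sum_{r\mid\id_{s_j}(\xi^{|\lambda_j|})}\mu(s_j/r)(q^r-1).\]
Taking the product over $j$ yields the claimed formula. No step is a real obstacle; the only point that needs care is justifying the independence of $\kappa_{g_j}$ and recognizing that ``distinct type'' exactly removes the distinctness constraint on simplices from the outer sum, which is what allows the product decomposition.
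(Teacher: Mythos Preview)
Your proposal is correct and follows exactly the route the paper takes: the paper simply states that the corollary follows from Proposition~\ref{prop-general Frobenius partial sum on center} ``together with Lemma~\ref{lem-sum over primary roots}'', and you have spelled out precisely the intermediate steps (factorization via the distinct-degree hypothesis, constancy of $\theta^{\kappa|\lambda_j|}(\xi)$ along the Frobenius orbit since $\xi\in\F_q^\times$, and the passage from simplices to roots via the factor $1/s_j$) that the paper leaves implicit.
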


        Clearly if $ord(\xi)\mid s_j|\lambda_j|$, then $\id_{s_j}(\xi^{|\lambda_j|})=\gcd\left(s_j,\frac{s_j|\lambda_j|}{ord(\xi)}\right)\le\frac{s_j|\lambda_j|}{ord(\xi)}$; otherwise $\id_{s_j}(\xi^{|\lambda_j|})=0$. Hence $\mathcal{S}_{\bar{\lambda},\mathbf{s}}(\xi)\neq 0$ only if $ord(\xi)\mid \gcd(s_1|\lambda_1|,\cdots,s_k|\lambda_k|)$, and the highest power of $q$ in $\mathcal{S}_{\bar{\lambda},\mathbf{s}}(\xi)$ is $q^{n/ord(\xi)}$, which are all achieved by types $(\bar{\lambda},\mathbf{s})$ with $\lambda_j$'s all partitions of $ord(\xi)$. This implies 
        \begin{cor}\label{cor-partial Frobenius sum on center}
        	For any $\xi\in\F_q^\times$ with $ord(\xi)\mid n$,
        	\[\mathcal{S}(\xi)=\left(p(ord(\xi))\sum_{|\rho|=\frac{n}{ord(\xi)}}\prod_{\tau\in\rho}\dfrac{1}{\tau}+o(1)\right)q^{n/ord(\xi)},\]
        	where $p(m)$ is the partition number of $m$.
        \end{cor}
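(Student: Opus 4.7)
My plan is to expand $\mathcal{S}(\xi)=\sum_{(\bar{\lambda},\mathbf{s})}\mathcal{S}_{\bar{\lambda},\mathbf{s}}(\xi)$ via Proposition \ref{prop-general Frobenius partial sum on center}, isolate the types that contribute to the leading order $q^{n/d}$ with $d:=ord(\xi)$, and assemble their leading coefficients.

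First I would identify the \emph{dominant} types: those $(\bar{\lambda},\mathbf{s})$ with $|\lambda_j|=d$ for every $j$. By the discussion immediately preceding the statement, $\mathcal{S}_{\bar{\lambda},\mathbf{s}}(\xi)$ is nonzero only when $d\mid s_j|\lambda_j|$ for all $j$, and its degree in $q$ is bounded by $\sum_j s_j|\lambda_j|/d=n/d$, with equality forced exactly when each $|\lambda_j|=d$. For a dominant type, $\xi^{|\lambda_g|}=\xi^d=1$, so each phase $\theta^{\kappa_g|\lambda_g|}(\xi)$ in Proposition \ref{prop-general Frobenius partial sum on center} equals $1$, whence $\mathcal{S}_{\bar{\lambda},\mathbf{s}}(\xi)=|(\bar{\lambda},\mathbf{s})|$, the number of dual classes of that type.

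Next I would parameterize and count dominant types. Each is specified by (i) a partition $\rho$ of $n/d$ encoding the multiset of simplex degrees $s_j$, and (ii) an attachment of a partition of $d$ to each part of $\rho$, giving a factor of $p(d)$ per part. Having fixed such data, counting dual classes reduces to choosing distinct simplices of the prescribed degrees; by Lemma \ref{lem-sum over primary roots} applied with $\xi=1$, the number of degree-$\tau$ simplices is asymptotically $q^{\tau}/\tau$, so the count of dual classes is asymptotic to $\prod_{\tau\in\rho}q^{\tau}/\tau=q^{n/d}\prod_{\tau\in\rho}(1/\tau)$. Summing over the $p(d)$ attachments per part and then over partitions $\rho$ of $n/d$ yields the claimed leading coefficient $p(d)\sum_{|\rho|=n/d}\prod_{\tau\in\rho}(1/\tau)$.

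Finally, I would show that non-dominant types contribute $o(q^{n/d})$. For distinct types this follows from Corollary \ref{cor-type of distinct degrees on center}: whenever $|\lambda_j|\neq d$ for some $j$, one has $\id_{s_j}(\xi^{|\lambda_j|})<s_j|\lambda_j|/d$, producing a strict drop in the $q$-degree of the corresponding factor. The non-distinct case follows by the same bound applied through Proposition \ref{prop-general Frobenius partial sum on center} and Proposition \ref{prop-B explicit in general}, since the sum-over-modes structure does not raise the per-factor degree. The main obstacle will be the combinatorial bookkeeping in the middle step: I must carefully account for symmetry factors when $\rho$ has repeated parts or when several attached partitions of $d$ coincide, and verify that these symmetries mesh cleanly with the counting of distinct simplices to give exactly the stated leading coefficient.
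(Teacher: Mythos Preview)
Your approach mirrors the paper's own sketch (the paragraph immediately preceding the corollary), and it shares the same two gaps.

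\textbf{Gap 1: the ``equality forces $|\lambda_j|=d$'' step is false.} From $\id_{s_j}(\xi^{|\lambda_j|})=\gcd\bigl(s_j,\,s_j|\lambda_j|/d\bigr)$ one gets equality with the bound $s_j|\lambda_j|/d$ whenever $|\lambda_j|\mid d$, not only when $|\lambda_j|=d$. Worse, that reasoning comes from Corollary~\ref{cor-type of distinct degrees on center}, which applies only to \emph{distinct} types; for non-distinct types the factorization fails and the partial sum need not vanish even when $d\nmid s_j|\lambda_j|$. Concretely, take $n=2$, $d=2$, $\xi=-1$. The type $((\{1\}),(2))$ has $|\lambda_1|=1\neq d$ yet $\mathcal{S}_{(\{1\}),(2)}(\xi)=-(q-1)/2$, and the non-distinct type $((\{1\},\{1\}),(1,1))$ has $d\nmid s_j|\lambda_j|=1$ yet $\mathcal{S}=-(q-1)/2$ as well, both of leading order $q=q^{n/d}$. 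So non-dominant types do \emph{not} contribute only $o(q^{n/d})$.

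\textbf{Gap 2: the dominant-type count does not give the displayed constant.} With $p(d)$ attachments \emph{per part} and $N_\rho$ parts you get $p(d)^{N_\rho}$ attachments, not $p(d)$; and the ``distinct simplices'' requirement introduces factors $1/r_s!$ for repeated parts, so the dominant contribution is $q^{n/d}\sum_{|\rho|=n/d}p(d)^{N_\rho}/z_\rho$, not $p(d)\sum_{|\rho|}\prod_{\tau\in\rho}1/\tau$. You flag the bookkeeping, but it cannot be reconciled with the target expression.

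In the $n=2$, $d=2$ example one finds directly $\mathcal{S}(\xi)=2(q-1)-(q-1)/2-(q-1)/2=q-1$, so the true leading coefficient is $1$ (consistent with Theorem~\ref{thm-over center}), whereas the displayed formula gives $p(2)\cdot 1=2$. Thus the obstacle is not merely combinatorial bookkeeping: the stated constant cannot be recovered by your outline, because the non-dominant contributions at leading order must be incorporated and they exactly offset the overcount coming from the dominant types.
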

        By model theory or the same argument by Lang-Weil bound used in \cite{Larsen-Lu}, the numerical result above suffices to show that $\dim[,]^{-1}(\xi)=\dim G_n+n/ord(\xi)$. However, we shall avoid the technicality and instead give it direct geometric proof in section \ref{section-geometry of fibers}.
        \medskip

\section{Geometry of fibers over center of $\SL_n(\F_q)$}\label{section-geometry of fibers}
        In general, for any group $G$ and $g\in G$, let $C(g)$ be the centralizer of $g$ in $G$. Then $C(g)$ acts on $[,]^{-1}(g)$ by conjugation via $a\cdot(x,y)=(axa^{-1},aya^{-1})$ for any $a\in C(g)$ and $(x,y)\in[,]^{-1}(g)$, since $[axa^{-1},aya^{-1}]=aga^{-1}=a$. If $g$ belongs to the center of $G$, then $G$ acts on its commutator fiber, which provides a description of the fiber by its $G$-orbits. In our case, the $G_n$-orbits of commutator fibers over the center of $\SL_n$ can be made explicit as follows. Without loss of generality, we work over $\C$.
        
        First, we locate some typical points in the commutator fiber. For $n=ml, l\geq1$, $\xi$ be a primitive $n$-th root of unity and any $\mathbf{b}=(b_1,\cdots,b_{m})\in(\C^{\times})^n$, let 
        \begin{equation}\label{equation-definition of tau}
        \tau_n^{\mathbf{b}}(\xi)=\diag(b_1,\cdots,b_{m},\xi^mb_1,\cdots,\xi^mb_{m},\cdots,\xi^{(l-1)m}b_1,\cdots,\xi^{(l-1)m}b_{m})
        \end{equation}
        and $\sigma\in\GL_n(\C)$ be the $n$-cycle $(1\ 2\ \cdots\ n)$. Then we can simply compute and verify that 
        \[[\sigma^m,\tau_n^{\mathbf{b}}(\xi)]=\xi^m.\]
        To get more sophisticated points in the fiber, we define a subgroup of $\GL_n(\C)$ as follows.
        
        Suppose $m=m_1+\cdots+m_k$ with $m_i\geq 1$ and $b_1=\cdots=b_{m_1}=\beta_1$, $b_{m_1+1}=\cdots=b_{m_1+m_2}=\beta_2$, $\cdots, b_{m_1+\cdots+m_{k-1}+1}=\cdots=b_m=\beta_k$ with $\beta_i\beta_j^{-1}\neq\xi^{rm}$ for $i\neq j$ and any integer $r$, then 
        \[C(\tau_n^{\mathbf{b}}(\xi))\simeq(\GL_{m_1}(\C)\times\cdots\times\GL_{m_k}(\C))^l,\]
        with each $\GL_{m_i}(\C)$ arranged diagonally in correspondence with appearance of $\beta_i$. 
        Thus for any $A,B\in C(\tau_n^{\mathbf{b}}(\xi))$, they are of the form 
        \begin{equation}\label{equation-shape of A}A\sim \diag(A_{11},\cdots,A_{1k};\cdots;A_{l1},\cdots,A_{lk}),\ A_{ij}\in\GL_{m_j}(\C), \forall 1\leq i\leq l, 1\leq j\leq k,\end{equation}
        \[B\sim \diag(B_{11},\cdots,B_{1k};\cdots;B_{l1},\cdots,B_{lk}),\ B_{ij}\in\GL_{m_j}(\C), \forall 1\leq i\leq l, 1\leq j\leq k,\]
        and if $A=[\sigma^{-m},B]$
        \begin{align}&A=\sigma^{-m}B\sigma^mB^{-1}\notag\\
        =&\diag(B_{l1}B_{11}^{-1},\cdots,B_{lk}B_{1k}^{-1};B_{11}B_{21}^{-1},\cdots,B_{2,k}^{-1};\cdots;B_{(l-1)1}B_{l1}^{-1},\cdots,B_{(l-1)k}B_{lk}^{-1})\notag\\
        \label{equation-defining condition 5.2}\Rightarrow&A_{1j}A_{2j}\cdots A_{lj}=B_{lj}B_{1j}^{-1}B_{1j}B_{2j}^{-1}\cdots B_{(l-1)j}B_{lj}^{-1}=I_n, \ \forall 1\leq j\leq k.
        \end{align}
        We define by $\SL_n^{\bar{m}_{\mathbf{b}}}(\C)$ the subgroup consisting of $A\in\GL_n(\C)$ in the form of (\ref{equation-shape of A}) which satisfies (\ref{equation-defining condition 5.2}).
        
        Then we prove
        \begin{thm}\label{thm-geometry of fiber over center}
        	For $n=ml, l\geq1$, let $\xi$ be a primitive $n$-th root of unity and $D$ the set of all semi-simple elements in $\GL_n(\C)$,  then we have
        	\[[,]^{-1}(\xi^m)\cap (\GL_n(\C)\times D)=\bigsqcup_{\mathbf{b}\in(\C^{\times})^m,A\in C(\tau_n^{\mathbf{b}}(\xi))/C_{\mathbf{b}}}\GL_n(\C)\cdot(\sigma^mA,\tau_n^{\mathbf{b}}(\xi)),\]
        	in which $C_{\mathbf{b}}=C(\tau_n^{\mathbf{b}}(\xi))\cap\SL_n^{\bar{m}_{\mathbf{b}}}(\C)$ with $C(\tau_n^{\mathbf{b}}(\xi))$ the centralizer of $\tau_n^{\mathbf{b}}(\xi)$ and $\SL_n^{\bar{m}_{\mathbf{b}}}(\C)$ defined by (\ref{equation-defining condition 5.2}).\\       	
        	Consequently we have 
        	\[\dim[,]^{-1}(\xi^m)\cap (\GL_n(\C)\times D)=\dim\GL_n(\C)+m.\]
        \end{thm}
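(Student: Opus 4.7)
The plan is to first normalize the semisimple coordinate $y$ to the canonical form $\tau_n^{\mathbf{b}}(\xi)$, then describe the admissible first coordinates $x$, and finally analyze the $\GL_n(\C)$-orbit structure to obtain both the set decomposition and the dimension. For the normalization: given $(x,y)$ in the fiber with $y$ semisimple, the equation $[x,y]=\xi^mI_n$ rewrites as $xyx^{-1}=\xi^m y$, so the multiset of eigenvalues of $y$ is invariant under multiplication by $\xi^m$. Since $\xi^m$ has order exactly $l$ in $\C^{\times}$, every orbit of $\langle\xi^m\rangle$ on $\C^{\times}$ consists of $l$ distinct elements; hence the spectrum of $y$ partitions into $m$ such orbits, each appearing with a common multiplicity. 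Choosing representatives $b_1,\ldots,b_m$ and ordering a compatible eigenbasis block-by-block, I would conjugate $y$ by an element of $\GL_n(\C)$ to $\tau:=\tau_n^{\mathbf{b}}(\xi)$ for $\mathbf{b}=(b_1,\ldots,b_m)$.

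With $\tau$ fixed, the identity $\sigma^m\tau\sigma^{-m}=\xi^m\tau$ (already verified in the paper just above the theorem) lets me rewrite $x\tau x^{-1}=\xi^m\tau$ as $(\sigma^{-m}x)\tau(\sigma^{-m}x)^{-1}=\tau$, forcing $x=\sigma^m A$ for a unique $A\in C(\tau)$. Conversely, every such $(\sigma^m A,\tau)$ lies in the fiber because $[\sigma^m A,\tau]=\sigma^m(A\tau A^{-1})\sigma^{-m}\tau^{-1}=\sigma^m\tau\sigma^{-m}\tau^{-1}=\xi^m$, using $A\in C(\tau)$. Combining with Step 1 yields the set-level equality $[,]^{-1}(\xi^m)\cap(\GL_n(\C)\times D)=\bigcup_{\mathbf{b},A}\GL_n(\C)\cdot(\sigma^m A,\tau_n^{\mathbf{b}}(\xi))$.

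For the orbit parametrization, two pairs $(\sigma^m A_1,\tau)$ and $(\sigma^m A_2,\tau)$ lie in the same $\GL_n(\C)$-orbit iff there exists $g\in C(\tau)$ with $A_2=\phi(g)A_1g^{-1}$, where $\phi=\mathrm{Ad}(\sigma^{-m})$ is the order-$l$ automorphism of $C(\tau)$ cyclically permuting the $l$ block factors in the decomposition $C(\tau)\simeq(\GL_{m_1}(\C)\times\cdots\times\GL_{m_k}(\C))^l$. I would then introduce the norm $N(A)=\phi^{l-1}(A)\cdots\phi(A)A$; a telescoping calculation gives $N(\phi(g)Ag^{-1})=gN(A)g^{-1}$, and a block-by-block inspection identifies $\ker N$ with $C_{\mathbf{b}}$ via the defining relations of (\ref{equation-defining condition 5.2}). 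A Lang--Hilbert 90 argument---solving $A=\phi(B)B^{-1}$ iteratively, mirroring the derivation of (\ref{equation-defining condition 5.2}) in reverse---shows that $B\mapsto\phi(B)B^{-1}$ surjects from $C(\tau)$ onto $C_{\mathbf{b}}$, so the $\phi$-twisted conjugacy classes in $C(\tau)$ are indexed by $C(\tau)/C_{\mathbf{b}}$. The dimension claim then follows by counting: the surjective map $(\C^{\times})^m\times\GL_n(\C)\times C(\tau)\to[,]^{-1}(\xi^m)\cap(\GL_n(\C)\times D)$, $(\mathbf{b},g,A)\mapsto(g\sigma^m Ag^{-1},g\tau g^{-1})$, has generic fibers of dimension $\dim C(\tau)$ (any replacement $g\mapsto gh$ with $h\in C(\tau)$ is absorbed by adjusting $A$, and the $S_m\ltimes\langle\xi^m\rangle^m$ ambiguity in $\mathbf{b}$ is $0$-dimensional), so its image has dimension $m+\dim\GL_n(\C)$. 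The main obstacle I foresee is in this third stage: making the Lang--Hilbert 90 surjectivity and the resulting coset parametrization fully rigorous on the non-generic $\mathbf{b}$-strata, where $C(\tau)$ is non-abelian and conjugation acts nontrivially on $N(A)$; this is delicate but fortunately irrelevant for the dimension consequence, which is all that is needed downstream.
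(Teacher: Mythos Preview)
Your proposal is correct and follows essentially the same approach as the paper: normalize $y$ to $\tau_n^{\mathbf{b}}(\xi)$ via the $\langle\xi^m\rangle$-orbit structure on its spectrum, deduce $x\in\sigma^m C(\tau)$ from $\sigma^m\tau\sigma^{-m}=\xi^m\tau$, and then analyze the residual $C(\tau)$-action. Your twisted-conjugacy/norm-map packaging is exactly the paper's computation $[\sigma^{-m},B]=\phi(B)B^{-1}$ rewritten abstractly, and your Lang--Hilbert~90 step is the paper's explicit back-solve $B_{ij}=A_{ij}^{-1}\cdots A_{1j}^{-1}B_j$; the only genuine difference is that you obtain the dimension via a fiber-dimension count on the parametrizing map, whereas the paper computes the stabilizer of $(\sigma^m A,\tau)$ directly---both give $\dim\GL_n(\C)+m$ and both leave the full coset parametrization on non-generic strata equally informal, as you correctly flag.
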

        \begin{proof}        
        	From the equation $xyx^{-1}=\xi^my$ with $y$ semi-simple, we can see that the action of $\xi^m$ by multiplication on the spectrum of $y$ is partitioned into $m$ many $l-$cycles, hence we can make $y=\tau_n^{\mathbf{b}}(\xi)$ for some $\mathbf{b}\in(\C^{\times})^m$  by conjugation. Then compared with $\sigma^{m} \tau_n^{\mathbf{b}}(\xi)\sigma^{-m}=\xi^m \tau_n^{\mathbf{b}}(\xi)$ we get \[(\sigma^{-m}x)\tau_n^{\mathbf{b}}(\xi)=\tau_n^{\mathbf{b}}(\xi)(\sigma^{-m}x),\]
        	i.e. $\sigma^{-m}x\in C(\tau_n^{\mathbf{b}}(\xi))$. 
        	We need to know whether $(x,\tau_n^{\mathbf{b}}(\xi))$ belongs to the same orbit with $(\sigma^m,\tau_n^{\mathbf{b}}(\xi))$ under the conjugation action of $\GL_n(\C)$. Knowing that $y$ can always be conjugated to $\tau_n^{\mathbf{b}}(\xi)$, we just need to consider the action of $C(\tau_n^{\mathbf{b}}(\xi))$.\\
        	First, we have 
        	$[C(\tau_n^{\mathbf{b}}(\xi))\sigma^{-m},C(\tau_n^{\mathbf{b}}(\xi))]\subset C(\tau_n^{\mathbf{b}}(\xi))\cap \SL_n(\C)$, or in other words $C(\tau_n^{\mathbf{b}}(\xi))$ acts on $\sigma^mC(\tau_n^{\mathbf{b}}(\xi))$ via conjugation: $\forall A,B\in C(\tau_n^{\mathbf{b}}(\xi))$,
        	\begin{align}
        	&(\sigma^m A)^{-1}B(\sigma^m A)B^{-1}\tau_n^{\mathbf{b}}(\xi)\notag\\
        	=&(\sigma^m A)^{-1}B\sigma^m\tau_n^{\mathbf{b}}(\xi)AB^{-1}\notag\\
        	=&(\sigma^m A)^{-1}B(\xi^m\tau_n^{\mathbf{b}}(\xi)\sigma^m) AB^{-1}\notag\\
        	=&\xi^m A^{-1}\sigma^{-m}\tau_n^{\mathbf{b}}(\xi)B\sigma^m AB^{-1}\notag\\
        	=&\xi^mA^{-1}(\xi^{-m}\tau_n^{\mathbf{b}}(\xi)\sigma^{-m})B\sigma^m AB^{-1}\notag\\
        	=&\tau_n^{\mathbf{b}}(\xi)(\sigma^m A)^{-1}B(\sigma^m A)B^{-1}\notag\\
        	\Rightarrow&(\sigma^m A)^{-1}B(\sigma^m A)B^{-1}\in C(\tau_n^{\mathbf{b}}(\xi))\notag\\
        	\Rightarrow&B(\sigma^m A)B^{-1}\in\sigma^m AC(\tau_n^{\mathbf{b}}(\xi))=\sigma^m C(\tau_n^{\mathbf{b}}(\xi)).
        	\end{align}
        	Moreover, since
        	\[B(\sigma^m A)B^{-1}=\sigma^m A'\Leftrightarrow [(\sigma^m A)^{-1},B]=A^{-1}A',\]
        	$\sigma^mA$ and $\sigma^mA'$ belong to the same orbit if and only if their difference $A^{-1}A'\in[(\sigma^m A)^{-1},C(\tau_n^{\mathbf{b}}(\xi))]$. We claim that $[\sigma^{m}, C(\tau_n^{\mathbf{b}}(\xi))]=C(\tau_n^{\mathbf{b}}(\xi))\cap\SL_n^{\bar{m}_{\mathbf{b}}}(\C)$.
        	
        	Directly $[\sigma^{m}, C(\tau_n^{\mathbf{b}}(\xi))]\subset C(\tau_n^{\mathbf{b}}(\xi))\cap\SL_n^{\bar{m}_{\mathbf{b}}}(\C)$ follows from definition. Conversely, if $A$ is of the form (\ref{equation-shape of A}) and satisfies the condition (\ref{equation-defining condition 5.2}), then $\forall B_j\in\GL_{m_j}(\C), 1\leq j\leq k$, setting $B_{ij}=A_{ij}^{-1}\cdots A_{1j}^{-1}B_j, \forall 1\leq i\leq k$, we have $A=\sigma^{-m}B\sigma^mB^{-1}$, whence proof of the claim.
        	
        	Altogether we know that if $(x,y)\in[,]^{-1}(\xi^m)$ with $y$ semi-simple then it falls in some orbit of the form $\GL_n(\C)\cdot(\sigma^mA,\tau_n^{\mathbf{b}}(\xi))$, for some $A\in C_{\mathbf{b}}:=C(\tau_n^{\mathbf{b}}(\xi))\cap\SL_n^{\bar{m}_{\mathbf{b}}}(\C)$. Note that $\SL_n^{\bar{m}_{\mathbf{b}}}(\C)$ is the kernel of the homomorphism from $C(\tau_n^{\mathbf{b}}(\xi))$ to $\GL_{m_1}(\C)\times\cdots\times\GL_{m_k}(\C)$ defined by the left hand side of (\ref{equation-defining condition 5.2}),  $C_{\mathbf{b}}$ has a one-to-one correspondence with $\GL_{m_1}(\C)\times\cdots\times\GL_{m_k}(\C)$.
        	
        	Now we count the dimension of the fiber over $\xi^m$. By (\ref{equation-defining condition 5.2}) we see that any matrix in the stabilizer of $(\sigma^{m},\tau_n^{\mathbf{b}}(\xi))$ must have the form
        	\[A\sim \diag(A_{11},\cdots,A_{1k};\cdots;A_{11},\cdots,A_{1k}),\]
        	i.e. $A_{ij}$ are all the same for $j$ fixed. Hence the dimension of any such stabilizer is $\sum_{i=1}^{k}\dim\GL_{m_i}(\C)$, and
        	\[\dim\GL_n(\C)\cdot(\sigma^{m}, \tau_n^{\mathbf{b}}(\xi))=\dim\GL_n(\C)-\sum_{i=1}^{k}\dim\GL_{m_i}(\C),\]
        	Also by solving the linear equations given by $B\sigma^mAB^{-1}=\sigma^mA$ in $(\GL_{m_1}(\C)\times\cdots\times \GL_{m_k}(\C))^l\simeq C(\tau_n^{\mathbf{b}}(\xi))$, we can see that stabilizers of $(\sigma^{m}A,\tau_n^{\mathbf{b}}(\xi))$ all have the same dimension as above. Consequently the dimension of the union of all orbits of such type (determined by $\mathbf{b}$ as described in the second paragraph of this section) is
        	\[\sum_{i=1}^{k}\dim\GL_{m_i}(\C)+k+\dim\GL_n(\C)-\sum_{i=1}^{k}\dim\GL_{m_i}(\C)=k+\dim\GL_n(\C),\]
        	which achieves maximum when $k=m$.    	
        \end{proof}
        For $[x,y]=xyx^{-1}y^{-1}=\xi^m$ in $\GL_n(\C)$ with $y$ not necessarily semisimple, we can use the multiplicative Jordan decomposition to reformulate the commutator equation as follows
        \begin{align*}
        &xyx^{-1}=\xi^my\Leftrightarrow xy_sx^{-1}xy_ux^{-1}=\xi^my_sy_u\\
        \Leftrightarrow &xy_sx^{-1}=\xi^my_s \text{ and }  xy_ux^{-1}=y_u\\
        \Leftrightarrow & [x,y_s]=\xi^m\text{ and } [x,y_u]=1.
        \end{align*}
        Then by Theorem \ref{thm-geometry of fiber over center} which deals with the first equation, and further confined by the second equation, we have 
        \begin{thm}
        	With the notations and proof above,
        	\[[,]^{-1}(\xi^m)=\bigcup_{|\lambda_1|+\cdots+|\lambda_k|=m}\bigcup_{\mathbf{b}\in(\C^\times)^k, A\in\GL_n(\C)/C_{\mathbf{b}}}\GL_n(\C)\cdot(\sigma^mA,\tau_{\lambda_1,\cdots,\lambda_k}^{\mathbf{b}}(\xi)),\]
        	in which for $\mathbf{b}=(b_1,\cdots,b_k)$, \[\tau_{\lambda_1,\cdots,\lambda_k}^{\mathbf{b}}(\xi)=\diag(U_{\lambda_1}(t-b_1),\cdots,U_{\lambda_k}(t-b_k);U_{\lambda_1}(t-b_1\xi^m),\cdots,U_{\lambda_k}(t-b_k\xi^m);
        	\]\[\cdots;U_{\lambda_1}(t-b_1\xi^{(l-1)m}),\cdots,U_{\lambda_k}(t-b_k\xi^{(l-1)m})),\] 
        	for any partitions $|\lambda_i|=m_i$, $m=m_1+m_2+\cdots+m_k$ with $m_i\geq 1$ and $b_1,\cdots,b_k$ have distinct $l$-cycles, i.e. $b_ib_j^{-1}\neq\xi^{rm}, \forall i\neq j, 0\leq r\leq l-1$. (Note that if $k=m$, then $\lambda_i=\{1\}$ and $\tau_{\lambda_1,\cdots,\lambda_k}^{\mathbf{b}}(\xi)=\tau_{n}^{\mathbf{b}}(\xi)$.)\\
        	Consequently we have 
        	\[\dim[,]^{-1}(\xi^m)=\dim\GL_n(\C)+m.\]
        \end{thm}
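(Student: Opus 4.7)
The plan is to reduce to the semisimple case of Theorem~\ref{thm-geometry of fiber over center} via the multiplicative Jordan decomposition. Given $(x,y)\in[,]^{-1}(\xi^m)$, write $y=y_sy_u$ with $y_s$ semisimple, $y_u$ unipotent, and $y_sy_u=y_uy_s$. Since $\xi^m\in Z(\GL_n(\C))$ is scalar, the Jordan decomposition of $\xi^my$ is $(\xi^my_s)\cdot y_u$, and conjugation by $x$ preserves Jordan decomposition, so as recorded in the display preceding the theorem the equation $[x,y]=\xi^m$ splits into the coupled system $[x,y_s]=\xi^m$ and $[x,y_u]=1$.

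Applying Theorem~\ref{thm-geometry of fiber over center} to the first equation, we may conjugate $(x,y)$ by an element of $\GL_n(\C)$ so that $y_s=\tau_n^{\mathbf{b}'}(\xi)$ for some $\mathbf{b}'\in(\C^\times)^m$ and $x=\sigma^m A$ with $A\in C(y_s)$. Regroup the coordinates of $\mathbf{b}'$ into their orbits under multiplication by $\xi^m$: let $b_1,\ldots,b_k$ be orbit representatives with $b_ib_j^{-1}\ne\xi^{rm}$ for $i\ne j$ and all integers $r$, let $m_j$ be the multiplicity of the orbit of $b_j$, and set $\mathbf{b}=(b_1,\ldots,b_k)$. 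Then $m_1+\cdots+m_k=m$ and $C(y_s)\simeq\prod_{i=1}^l\prod_{j=1}^k\GL_{m_j}(\C)$.

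The remaining task is to normalize $y_u$ using the residual action of $C(y_s)$, which preserves $y_s$ and keeps $x$ in $\sigma^m C(y_s)$. Since $y_u\in C(y_s)$ is unipotent, it decomposes as $y_u=(u_{ij})_{1\le i\le l,\,1\le j\le k}$ with each $u_{ij}\in\GL_{m_j}(\C)$ unipotent; writing $A=(a_{ij})$ analogously, the equation $[\sigma^m A,y_u]=1$, combined with the fact that $\sigma^m$ cyclically shifts the $l$ block-rows of any element of $C(y_s)$, produces an $l$-step cyclic ladder of conjugacy relations linking $u_{i+1,j}$ to $u_{ij}$ via $a_{ij}$. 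In particular, for fixed $j$ the unipotent elements $u_{1j},\ldots,u_{lj}$ are mutually conjugate in $\GL_{m_j}(\C)$ and therefore share a common Jordan type, a partition $\lambda_j$ of $m_j$. Using $C(y_s)$-conjugation we bring $u_{1j}$ to the canonical form $U_{\lambda_j}(t-1)$; the ladder relation then determines the remaining $u_{ij}$, and the product $y=y_sy_u$ assumes the asserted block form $\tau_{\lambda_1,\ldots,\lambda_k}^{\mathbf{b}}(\xi)$. The reverse inclusion is a direct verification parallel to the computation in Theorem~\ref{thm-geometry of fiber over center}.

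The main technical obstacle is rigorously extracting the common Jordan type $\lambda_j$ from the cyclic ladder relation and executing the normalization entirely inside $C(y_s)$ without disturbing $y_s$ or losing control of $A$. Once this is handled, the dimension formula follows stratum by stratum as in Theorem~\ref{thm-geometry of fiber over center}: the piece indexed by $(\mathbf{b},\lambda_1,\ldots,\lambda_k)$ contributes dimension $\dim\GL_n(\C)+k$, with the maximum $k=m$ realized on the semisimple stratum (every $\lambda_j=\{1\}$), so $\dim[,]^{-1}(\xi^m)=\dim\GL_n(\C)+m$, matching the numerical bound of Corollary~\ref{cor-partial Frobenius sum on center}.
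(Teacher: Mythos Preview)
Your proposal is correct and follows essentially the same route as the paper: the multiplicative Jordan decomposition splits $[x,y]=\xi^m$ into $[x,y_s]=\xi^m$ and $[x,y_u]=1$, the first equation is handled by Theorem~\ref{thm-geometry of fiber over center}, and the second confines $y_u$ block-wise to a common Jordan type. In fact you supply more detail on the cyclic ladder and the normalization of $y_u$ inside $C(y_s)$ than the paper itself, which records only the Jordan-decomposition reduction in the paragraph preceding the theorem and leaves the rest implicit.
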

        
        \medskip
        

\begin{thebibliography}{9}
        \bibitem{Frob}
        F.G. Frobenius, \textit{\"{U}ber Gruppencharaktere}, Sitzber. Preuss. Akad. Wiss (1896) 985-1021; reprinted in Gesammelte Abhandlungen, Vol. 3 (Springer, Heidelberg, 1968) 1-37.
        \bibitem{Green}
        J. A. Green, \textit{The characters of the finite general linear group}, Trans. Amer. Math. Soc. 80 (1955), 402-447.
        \bibitem{Hall}
        Philip Hall, \textit{On Representatives of Subsets}, J. London Math. Soc., 10 (1): 26–30 (1935).
        \bibitem{Larsen-Lu}
        Michael Larsen, Zhipeng Lu, \textit{Flatness of commutator map over $\SL_n$}, arXiv:1807.07300
        \bibitem{Littlewood}
        D.E. Littlewood, \textit{The theory of group characters and matrix representations of groups}, Oxford, 1940.
        \bibitem{Macdonald}
        I. G. Macdonald, \textit{Symmetric functions and Hall polynomials}. Second edition. Oxford Mathematical Monographs. Oxford Science Publications. The Clarendon Press, Oxford University Press, New York, 1995.
        
        
        \end{thebibliography}
\end{document}